\newtheorem{theorem}{Theorem}[section]
\newtheorem{lemma}[theorem]{Lemma}
\newtheorem{corollary}[theorem]{Corollary}
\newtheorem{remark}[theorem]{Remark}
\begin{document}

	\title[Congruences for $k$-Elongated Partition Diamonds]{Elementary Proofs of Infinitely Many Congruences for $k$-Elongated Partition Diamonds}

	\author{Robson da Silva}
	\address{Universidade Federal de S\~ao Paulo, S\~ao Jos\'e dos Campos, SP 12247--014, Brazil}
	\email{silva.robson@unifesp.br}

	\author{Michael D. Hirschhorn}
	\address{School of Mathematics, UNSW, Sydney, Australia 2052}
	\email{m.hirschhorn@unsw.edu.au}

	\author{James A. Sellers}
	\address{Department of Mathematics and Statistics, University of Minnesota Duluth, Duluth, MN 55812, USA}
	\email{jsellers@d.umn.edu}

	\subjclass[2010]{11P83, 05A17}
	
	\keywords{congruences, partitions, $k$--elongated diamonds, generating functions}
	
	%\footnotetext[1]{} \vspace{0.5in}
	%\footnotetext[1]{2010 AMS Classification Numbers: Primary, 11P81; Secondary, 05A17.}
	
	\maketitle
	
	%\noindent{\footnotesize{\bf Abstract.}
	
	\begin{abstract}
 In 2007, Andrews and Paule published the eleventh paper in their series on MacMahon’s partition analysis, with a particular focus on broken $k$-diamond partitions.  On the way to broken $k$-diamond partitions, Andrews and Paule introduced the idea of $k$-elongated partition diamonds.  Recently, Andrews and Paule revisited the topic of $k$--elongated partition diamonds.  Using partition analysis and the Omega operator, they proved that the generating function for the partition numbers $d_k(n)$ produced by summing the links of $k$--elongated plane partition diamonds of length $n$ is given by $\frac{(q^2;q^2)_\infty^k}{(q;q)_\infty^{3k+1}}$  for each $k\geq 1.$  A significant portion of their recent paper involves proving several congruence properties satisfied by $d_1, d_2$ and $d_3$, using modular forms as their primary proof tool. In this work, our goal is to extend some of the results proven by Andrews and Paule in their recent paper by proving infinitely many congruence properties satisfied by the functions $d_k$ for an infinite set of values of $k.$  The proof techniques employed are all elementary, relying on generating function manipulations and classical $q$-series results.

	\end{abstract}
\section{Introduction}  

In 2007, Andrews and Paule \cite{ap11} published the eleventh paper in their series on MacMahon’s partition analysis, with a particular focus on the combinatorial objects that they called broken $k$-diamond partitions.  In that paper, Andrews and Paule introduced the idea of $k$-elongated partition diamonds.  Recently, Andrews and Paule \cite{ap13} revisited the topic of $k$--elongated partition diamonds.  Using partition analysis and the Omega operator, they proved an elegant representation for the generating function for the partition numbers $d_k(n)$ produced by summing the links of $k$--elongated plane partition diamonds of length $n.$ They then proceeded to prove several congruence properties satisfied by $d_1, d_2$ and $d_3$ using modular forms as their primary proof tool.  

More recently, Smoot \cite{Sm} extended the congruence work of Andrews and Paule, refining a conjectured infinite family of congruences that appears in \cite{ap13} and proving the following via modular forms: 

\begin{theorem}
\label{SmootCongs}
For all $\alpha\geq 0$ and for all $n\geq 0$ such that $8n\equiv 1\pmod{3^\alpha},$
$$
d_2(n) \equiv 0 \pmod{3^{2\lfloor \alpha/2 \rfloor +1}}.
$$
\end{theorem}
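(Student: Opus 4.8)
The plan is to turn the congruence into a statement about the $3$-adic valuation of coefficients along a tower of arithmetic progressions. The starting point is the Andrews--Paule generating function $\sum_{n\ge0}d_2(n)q^n=\frac{(q^2;q^2)_\infty^2}{(q;q)_\infty^7}$, together with the observation that the exponents in $q^{-1/8}\frac{(q^2;q^2)_\infty^2}{(q;q)_\infty^7}=\sum_{n\ge0}d_2(n)q^{(8n-1)/8}$ are governed by $8n-1$, which is precisely why the hypothesis is phrased through $8n$. Since $\gcd(8,3)=1$, the condition $8n\equiv1\pmod{3^\alpha}$ is equivalent to $n\equiv 8^{-1}\pmod{3^\alpha}$, a single residue class modulo $3^\alpha$, and these classes are nested as $\alpha$ grows. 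Thus the whole theorem concerns the $3$-adic valuation of the coefficients of $f(q):=\frac{(q^2;q^2)_\infty^2}{(q;q)_\infty^7}$ along a nested tower of progressions produced by repeatedly applying the operator $U_3$ that extracts the coefficients in a fixed residue class modulo $3$ and rescales $q^3\mapsto q$.

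The base case $\alpha=1$ is pleasantly elementary and suggests the right tools. Writing $f(q)=\psi(q)\cdot\frac{1}{(q;q)_\infty^6}$, where $\psi(q)=\frac{(q^2;q^2)_\infty^2}{(q;q)_\infty}=\sum_{m\ge0}q^{m(m+1)/2}$, and using $(q;q)_\infty^6=\big((q;q)_\infty^3\big)^2\equiv(q^3;q^3)_\infty^2\pmod3$, one gets $f(q)\equiv\frac{\psi(q)}{(q^3;q^3)_\infty^2}\pmod3$. The factor $\frac{1}{(q^3;q^3)_\infty^2}$ contributes only exponents $\equiv0\pmod3$, while the triangular numbers $m(m+1)/2$ are never $\equiv2\pmod3$; hence $f$ carries no term with exponent $\equiv2\pmod3$ modulo $3$, i.e. $d_2(3n+2)\equiv0\pmod3$, which is exactly the $\alpha=1$ statement (the $\alpha=0$ instance being vacuous). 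This already points to exact---not merely mod $3$---three-dissections of $\psi(q)$ and of Euler's product as the engine of the induction.

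To reach the higher powers of $3$ I would exploit the ``period two'' shape of the exponent $3^{2\lfloor\alpha/2\rfloor+1}$: the modulus is unchanged on each pair $\{2k,2k+1\}$ and gains a factor $3^2$ when $\alpha$ increases by $2$. Accordingly I would set up an induction producing exact three-dissection identities that express the residue-class part of $f$ selected at each stage as an explicit $\mathbb{Z}[[q]]$-combination of a fixed finite family of eta-quotients in $q$ (built from $\frac{(q^2;q^2)_\infty^2}{(q;q)_\infty^7}$ and a small number of companions via $q^3\mapsto q$). The residue-selecting $U_3$ then acts as a matrix $M$ over $\mathbb{Z}[[q]]$ on the finite-dimensional span of this family, and the congruence tower is controlled by the $3$-adic valuations of the iterates of $M$. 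The structural heart of the argument is to show that the two-step transfer matrix $M^{(2)}$ equals $3^2$ times an integer matrix; combined with the two explicit base cases $\alpha=1$ (valuation $1$) and $\alpha=2$ (valuation $3$, i.e. $d_2(9n+8)\equiv0\pmod{27}$, verified directly), this propagates along both the even and the odd subsequences and yields $v_3\bigl(\text{selected coefficients}\bigr)\ge 2\lfloor\alpha/2\rfloor+1$, which is the claim.

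The \emph{main obstacle} is exactly the discovery and verification of these exact three-dissection identities: one must pin down the correct finite set of eta-quotients that is closed under the residue-selecting $U_3$, and then prove that the resulting matrix has entries with the prescribed $3$-adic valuations so that two steps supply $3^2$. This is the elementary counterpart of the localized module of modular functions that Smoot builds with modular forms, and it is where the classical machinery must be deployed carefully---three-dissections of $\psi(q)$ and of $\frac{1}{(q;q)_\infty}$, the cubic theta relations of the Borweins, and Hirschhorn-style dissection formulas. Once the closed system and its divisibility are established, the induction on $\alpha$ reduces to routine bookkeeping of powers of $3$, and checking the base cases $\alpha=1$ and $\alpha=2$ completes the proof.
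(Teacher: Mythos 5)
You should first be aware that the paper contains no proof of Theorem \ref{SmootCongs} at all: it is Smoot's theorem, quoted from \cite{Sm}, where it is established with modular forms via his ``localization method.'' The only layer of it that the paper treats elementarily is the $\alpha=1$ case $d_2(3n+2)\equiv 0\pmod{3}$, namely Theorem \ref{APCor5}, and your second paragraph reproduces exactly that argument (Lemma \ref{lemma1} together with $f_1^6\equiv f_3^2\pmod{3}$ and the fact that triangular numbers are never $\equiv 2\pmod{3}$). So the one step of your proposal that is actually carried out coincides with what the paper does, and everything beyond it goes past anything the paper proves.

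The rest of the proposal is a research program, not a proof, and the step you yourself flag as the ``main obstacle'' is precisely the entire content of the theorem. Concretely: (i) the phrase ``finite-dimensional span of this family over $\mathbb{Z}[[q]]$'' is not meaningful as stated --- over $\mathbb{Z}[[q]]$ any single unit spans everything; the correct setting is a finitely generated module over a polynomial ring $\mathbb{Z}[x]$ in a Hauptmodul on the relevant modular curve (here of level $6$), and the known difficulty --- the very reason Smoot invented the localization method for this family --- is that the natural Atkin-style module attached to $f_2^2/f_1^7$ is \emph{not} closed under $U_3$ with integral coefficients; Smoot must localize (admit denominators) to obtain closure, so the integral closed system you posit is exactly what is expected to fail. (ii) Even granting such a system, the factorization of the two-step transfer operator as $3^2$ times an integral matrix is asserted, not argued; in Smoot's treatment the gain of $3^2$ every two steps, reflected in the exponent $2\lfloor \alpha/2\rfloor+1$, emerges from a delicate $3$-adic analysis of an infinite family of recurrences and initial data, not from a single clean matrix identity. (iii) A smaller slip: the $\alpha=0$ instance is not vacuous --- $8n\equiv 1\pmod{3^0}$ holds for every $n$, so read literally it would assert $3\mid d_2(n)$ for all $n$, which fails already at $n=0$ since $d_2(0)=1$; the statement is genuinely meaningful only for $\alpha\geq 1$. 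In sum, after the base cases your proposal defers the theorem's substance to unexhibited dissection identities and unverified divisibility claims, so it does not constitute a proof of Theorem \ref{SmootCongs}.
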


In this work, our goal is to extend some of the results proven by Andrews and Paule in \cite{ap13} by proving infinitely many congruence properties satisfied by the functions $d_k$ for an infinite set of values of $k.$  The proof techniques employed below are all elementary, relying on generating function manipulations and classical $q$-series results.

As noted in Andrews and Paule \cite{ap13}, we know that the generating functions in question are of the form
\begin{equation}
\sum_{n=0}^{\infty} d_k(n)q^n = \frac{f_2^k}{f_1^{3k+1}}
\label{gf}
\end{equation}
where $f_r = (1-q^r)(1-q^{2r})(1-q^{3r})(1-q^{4r})\dots$ is the usual $q$--Pochhammer symbol.

In order to provide the elementary proofs that we desire, we will require a few well--known $q$--series results.  These results include the following:  

\begin{lemma}
\label{lemma1}
$$
\frac{f_2^2}{f_1} = \sum_{m\geq 0}q^{m(m+1)/2}
$$
\end{lemma}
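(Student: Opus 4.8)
The plan is to identify the right-hand side with Gauss's classical identity for the generating function of the triangular numbers, and then reconcile the resulting infinite product with $f_2^2/f_1$ by means of the elementary even--odd factorization of $f_1$. First I would invoke Gauss's identity
$$
\sum_{m\geq 0} q^{m(m+1)/2} = \frac{(q^2;q^2)_\infty}{(q;q^2)_\infty},
$$
where $(q;q^2)_\infty = \prod_{j\geq 1}(1-q^{2j-1})$ collects the factors with odd index. The key algebraic observation is that $f_1 = \prod_{j\geq 1}(1-q^j)$ splits according to the parity of the index,
$$
f_1 = \prod_{j\geq 1}(1-q^{2j-1})\prod_{j\geq 1}(1-q^{2j}) = (q;q^2)_\infty\, f_2,
$$
so that $(q;q^2)_\infty = f_1/f_2$. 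Substituting this into Gauss's identity immediately yields $\sum_{m\geq 0} q^{m(m+1)/2} = f_2\big/(f_1/f_2) = f_2^2/f_1$, which is the claim.

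If a self-contained argument is preferred to citing Gauss, I would instead derive the identity from the Jacobi triple product
$$
\prod_{n\geq 1}(1-x^{2n})(1+x^{2n-1}y)(1+x^{2n-1}y^{-1}) = \sum_{n=-\infty}^{\infty} x^{n^2}y^n,
$$
specialized at $x=q^{1/2}$ and $y=q^{1/2}$. On the sum side this produces $\sum_{n} q^{n(n+1)/2}$, and since $n$ and $-(n+1)$ yield the same triangular exponent, the bilateral sum equals exactly twice $\sum_{m\geq 0} q^{m(m+1)/2}$. On the product side the specialization gives $2\prod_{n\geq 1}(1-q^n)(1+q^n)^2$, the stray factor of $2$ arising from the $n=1$ term $1+q^{0}$; using $\prod_{n\geq 1}(1+q^n) = f_2/f_1$ this collapses to $2f_2^2/f_1$, and cancelling the common factor of $2$ recovers the lemma.

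The computations here are entirely routine; the only point demanding care is the bookkeeping in the Jacobi-triple-product route, namely correctly extracting the factor of $2$ from the $n=1$ term and verifying that each triangular number is hit exactly twice by the bilateral sum. Since the stated aim of the paper is to give elementary proofs, I expect the authors simply to cite Gauss's identity, so that the lemma reduces to the one-line even--odd factorization of $f_1$.
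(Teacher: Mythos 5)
Your proposal is correct, and both of your derivations check out: the even--odd factorization $f_1=(q;q^2)_\infty f_2$ combined with Gauss's identity gives the lemma in one line, and your Jacobi-triple-product bookkeeping is also right --- the specialization $x=q^{1/2}$, $y=q^{1/2}$ does produce $\prod_{n\geq 1}(1-q^n)(1+q^n)(1+q^{n-1})$, the $n=1$ factor $1+q^0$ contributes exactly the stray $2$, and the involution $n\mapsto -(n+1)$ shows each triangular number is hit exactly twice, so the two sides match after cancelling $2$ (using $\prod_{n\geq 1}(1+q^n)=f_2/f_1$). The paper, however, offers no derivation at all: its entire proof is a citation of identity (1.5.3) in Hirschhorn's book \emph{The power of $q$}, so your closing prediction about the authors' style was essentially on target, except that they cite Hirschhorn rather than Gauss by name. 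The trade-off is the usual one: the paper's citation keeps its preliminary section compact, since this lemma is one of a battery of standard $q$-series facts all sourced to the same reference, whereas your argument makes the lemma genuinely self-contained --- and your triple-product route has the added virtue that the same single tool also yields Lemmas \ref{lemma2}, \ref{lemma8}, and \ref{lemma3} of the paper by other specializations, so it could in principle replace several citations at once.
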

\begin{proof}
This result appears as (1.5.3) in Hirschhorn \cite{H}.  
\end{proof}

\begin{lemma}
\label{lemma2}
$$
f_1^3 = \sum_{m\geq 0}(-1)^m(2m+1) q^{m(m+1)/2}.
$$
\end{lemma}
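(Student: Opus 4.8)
This is Jacobi's classical identity for the cube of Euler's function, and the most direct elementary route is through the Jacobi triple product identity, which in the form I would use reads
$$(q;q)_\infty (z;q)_\infty (q/z;q)_\infty = \sum_{n=-\infty}^{\infty} (-1)^n z^n q^{n(n-1)/2},$$
where $(a;q)_\infty = \prod_{k\geq 0}(1-aq^k)$ and $f_1 = (q;q)_\infty$. The plan is to extract $f_1^3$ from this two-variable identity by letting $z\to 1$, the point at which the left side develops a triple factor of $f_1$. First I would peel off the vanishing factor: since $(z;q)_\infty = (1-z)(zq;q)_\infty$, dividing through by $1-z$ gives
$$(q;q)_\infty (zq;q)_\infty (q/z;q)_\infty = \frac{1}{1-z}\sum_{n=-\infty}^{\infty}(-1)^n z^n q^{n(n-1)/2},$$
and the left-hand side now evaluates to exactly $f_1^3$ at $z=1$.

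The key step is to make sense of the right-hand side in the limit $z\to 1$, which is an indeterminate $0/0$. I would handle this by pairing the index $n$ with $1-n$: a short check shows these two terms share the exponent $n(n-1)/2$ and carry opposite signs, so the series reorganizes as $\sum_{n\geq 1}(-1)^n q^{n(n-1)/2}(z^n - z^{1-n})$, where now every summand is individually divisible by $1-z$. Writing $\frac{z^n - z^{1-n}}{1-z} = -z^{1-n}\left(1 + z + \cdots + z^{2n-2}\right)$ makes the limit transparent: as $z\to 1$ this tends to $-(2n-1)$.

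Carrying out the limit termwise yields $f_1^3 = \sum_{n\geq 1}(-1)^{n+1}(2n-1)q^{n(n-1)/2}$, and the substitution $m = n-1$ converts this into the stated form $\sum_{m\geq 0}(-1)^m(2m+1)q^{m(m+1)/2}$. The main obstacle is precisely the indeterminate limit at $z=1$; the pairing $n\leftrightarrow 1-n$ is exactly what removes it, and justifying the passage to the limit inside the sum is routine, since the series converges uniformly on compact sets for $|q|<1$. Alternatively, as with Lemma \ref{lemma1}, this identity is entirely standard and may simply be quoted from Hirschhorn \cite{H}.
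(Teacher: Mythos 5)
Your proof is correct, but it takes a genuinely different route from the paper, which disposes of this lemma in one line by citing it as identity (1.7.1) in Hirschhorn \cite{H}. You instead give a complete derivation from the Jacobi triple product, and the details check out: the form $(q;q)_\infty (z;q)_\infty (q/z;q)_\infty = \sum_{n=-\infty}^{\infty}(-1)^n z^n q^{n(n-1)/2}$ is a standard statement of the triple product; the pairing $n \leftrightarrow 1-n$ does preserve the exponent since $(1-n)(-n)/2 = n(n-1)/2$ while flipping the sign $(-1)^{1-n} = -(-1)^n$; and the factorization $\frac{z^n - z^{1-n}}{1-z} = -z^{1-n}\bigl(1+z+\cdots+z^{2n-2}\bigr)$ gives the limit $-(2n-1)$ at $z=1$, so the reindexing $m=n-1$ lands exactly on $\sum_{m\geq 0}(-1)^m(2m+1)q^{m(m+1)/2}$. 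What each approach buys: the paper's bare citation keeps the exposition lean, which suits its purpose since the lemma is classical scaffolding rather than new content; your argument makes the result self-contained modulo the triple product, and it is in fact the classical proof of Jacobi's identity (essentially the derivation in \cite{H} itself), so nothing is lost in rigor---the termwise passage to the limit is justified as you say by locally uniform convergence for $|q|<1$. Your closing sentence offering the citation as an alternative shows you recognized that quoting \cite{H} is the route the paper actually takes.
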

\begin{proof}
This result appears as (1.7.1) in Hirschhorn \cite{H}.  
\end{proof}

\begin{lemma}
\label{lemma8}
$$
f_1 = \sum_{m = -\infty}^{\infty}(-1)^m q^{m(3m-1)/2}.
$$
\end{lemma}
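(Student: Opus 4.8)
\emph{Plan of proof.} The identity is Euler's Pentagonal Number Theorem, so the plan is to obtain it as a one-line specialization of the Jacobi triple product identity, which is the single substantive ingredient; everything else is bookkeeping. I would start from the triple product in the integer-exponent form
$$
\sum_{n=-\infty}^{\infty} q^{n(n+1)/2}\,z^{n}
=\prod_{n\geq 1}(1-q^{n})(1+q^{n}z)(1+q^{n-1}z^{-1}),
$$
a standard version of the identity recorded in Hirschhorn \cite{H}. This holds as an identity of formal Laurent series in $z$, and I will specialize $z$ so that both sides become series in $q$ alone.

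The key step is the parameter choice. Replacing $q$ by $q^{3}$ and setting $z=-q^{-2}$, the exponent on the left becomes $\tfrac{3n(n+1)}{2}-2n=\tfrac{n(3n-1)}{2}$ and the coefficient $z^{n}$ contributes the sign $(-1)^{n}$, so the left side is exactly $\sum_{n=-\infty}^{\infty}(-1)^{n}q^{n(3n-1)/2}$, which is the desired sum (after relabelling $n$ as $m$). On the right the three products become $\prod_{n\geq1}(1-q^{3n})$, $\prod_{n\geq1}(1-q^{3n-2})$, and $\prod_{n\geq1}(1-q^{3n-1})$; these range over the residue classes $0,1,2\pmod 3$ and hence supply each factor $1-q^{k}$ with $k\geq 1$ exactly once. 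The product therefore collapses to $\prod_{k\geq 1}(1-q^{k})=f_{1}$, and equating the two sides gives the claim.

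The only real obstacle is furnishing the triple product itself; once it is available, the specialization and the residue-class bookkeeping are routine, and the reason this route is clean is that the choice $z=-q^{-2}$ after $q\mapsto q^{3}$ keeps every exponent integral, so no half-integer ambiguity ever intrudes. If a fully self-contained argument were wanted instead, I would prove the identity combinatorially by Franklin's involution: expanding $f_{1}=\prod_{n\geq 1}(1-q^{n})$ groups the terms as $(-1)^{(\#\text{parts})}q^{(\text{sum})}$ over partitions into distinct parts, and the sign-reversing involution that plays the smallest part off against the top ``staircase'' cancels these partitions in pairs. The crux there would be the fixed-point analysis, which must be shown to leave precisely one surviving partition of size $\tfrac{m(3m-1)}{2}$ carrying sign $(-1)^{m}$ for each $m$, thereby reproducing the pentagonal-number series.
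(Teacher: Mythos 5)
Your proposal is correct, but it takes a different route from the paper, which offers no derivation at all: the paper simply cites the identity as (1.6.1) in Hirschhorn \cite{H} (it is Euler's Pentagonal Number Theorem, and all the $q$-series lemmas in this paper are handled the same way, by reference). Your specialization checks out in every detail: starting from the triple product $\sum_{n=-\infty}^{\infty} q^{n(n+1)/2} z^{n} = \prod_{n\geq 1}(1-q^{n})(1+q^{n}z)(1+q^{n-1}z^{-1})$, the substitution $q\mapsto q^{3}$, $z=-q^{-2}$ gives exponent $\tfrac{3n(n+1)}{2}-2n=\tfrac{n(3n-1)}{2}$ and sign $(-1)^{n}$ on the left, while on the right the third factor becomes $1+(-q^{2})q^{3n-3}=1-q^{3n-1}$ and the second becomes $1-q^{3n-2}$, so the three products sweep out the residue classes $0,1,2 \pmod 3$ and collapse to $f_{1}$ as you claim. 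This is in fact essentially the proof Hirschhorn gives at the cited location, so your argument reconstructs the content hidden behind the paper's citation; what your version buys is self-containment (given the triple product), and your fallback via Franklin's involution would buy full independence from the triple product at the cost of the more delicate fixed-point analysis, which you correctly flag as the crux of that alternative. The only caveat is that you still import the triple product from \cite{H} without proof, so your argument is a reduction to a deeper standard result rather than a proof from scratch --- but that is exactly the level of rigor the paper itself adopts throughout.
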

\begin{proof}
This result appears as (1.6.1) in Hirschhorn \cite{H}.  
\end{proof}

\begin{lemma}
\label{lemma3}
\begin{equation*}
    \frac{f_1^5}{f_2^2} = \sum_{m=-\infty}^{\infty} (6m+1)q^{m(3m+1)/2}.
%\label{Rama}
\end{equation*}
\end{lemma}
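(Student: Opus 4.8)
The plan is to recognize the right-hand side as a weight-$3/2$ unary theta series that arises by differentiating the quintuple product identity at the auxiliary point $z=1$. Writing the quintuple product identity schematically as
\[
\prod_{n=1}^{\infty}(1-q^n)(1-zq^{n-1})(1-z^{-1}q^{n})(1-z^{2}q^{2n-1})(1-z^{-2}q^{2n-1})=\sum_{n=-\infty}^{\infty}\left(z^{3n}-z^{-3n-1}\right)q^{n(3n+1)/2}
\]
(with the precise sign normalization pinned down below), I observe that both sides vanish at $z=1$, so the identity becomes informative only after applying $\frac{d}{dz}$ and then setting $z=1$.

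First I would differentiate the right-hand side. Since $\frac{d}{dz}\left(z^{3n}-z^{-3n-1}\right)=3nz^{3n-1}+(3n+1)z^{-3n-2}$, evaluating at $z=1$ collapses each summand to $3n+(3n+1)=6n+1$, producing exactly $\sum_{n}(6n+1)q^{n(3n+1)/2}$. Next I would differentiate the left-hand side. The only factor vanishing at $z=1$ is the $n=1$ instance of $(1-zq^{n-1})$, namely $(1-z)$, and it is a simple zero; hence the derivative at $z=1$ equals $\frac{d}{dz}(1-z)\big|_{z=1}$ times the remaining product evaluated at $z=1$. Setting $z=1$ in each of the surviving families gives $\prod(1-q^n)=f_1$ three times (from the $(1-q^n)$ family, from the $(1-zq^{n-1})$ family with the vanishing factor removed, and from the $(1-z^{-1}q^n)$ family) and $\prod(1-q^{2n-1})=f_1/f_2$ twice, so the remaining product is $f_1^3\cdot(f_1/f_2)^2=f_1^5/f_2^2$. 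Matching the two derivatives yields the claimed identity.

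The step I expect to be the main obstacle is the sign bookkeeping: the naive computation above attaches an overall $-1$ on the left coming from $\frac{d}{dz}(1-z)$, so one must adopt the precise signed form of the quintuple product identity (equivalently, whether the vanishing factor enters as $(1-z)$ or $(1-z^{-1})$, and how $z^{3n}-z^{-3n-1}$ is oriented) so that the two sides genuinely agree. The correct normalization is forced by comparing constant terms, since both $f_1^5/f_2^2$ and $\sum_n(6n+1)q^{n(3n+1)/2}$ have constant term $1$. I note that this identity is classical and, exactly as with Lemmas \ref{lemma1}, \ref{lemma2}, and \ref{lemma8}, it may simply be quoted from Hirschhorn \cite{H}; a derivation from Lemmas \ref{lemma1}, \ref{lemma2}, and \ref{lemma8} alone appears to require dividing one of those series by another rather than a finite product manipulation, which is why the quintuple-product route is the cleaner path.
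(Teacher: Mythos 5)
Your proof is correct, but it is genuinely different from what the paper does: the paper gives no derivation at all, proving this lemma purely by citation (Berndt, Corollary 1.3.21, and Hirschhorn, (10.7.3)), exactly as you anticipate in your closing remark. Your quintuple-product route is essentially the classical derivation that sits behind those citations. The structure of your argument is sound: both sides of the quintuple product identity vanish at $z=1$, the zero is simple, and so differentiating in $z$ and setting $z=1$ kills every term of the product-rule expansion except the one in which the vanishing factor is differentiated, leaving $f_1^3\cdot(f_1/f_2)^2=f_1^5/f_2^2$ against $\sum_{n}(6n+1)q^{n(3n+1)/2}$. Your sign worry is real but resolves cleanly: in the orientation you wrote down, the vanishing factor is $(1-z)$ with derivative $-1$ at $z=1$, whereas in the standard normalization
\[
\prod_{n\geq 1}(1-q^n)(1-zq^n)(1-z^{-1}q^{n-1})(1-z^2q^{2n-1})(1-z^{-2}q^{2n-1})=\sum_{n=-\infty}^{\infty}\left(z^{3n}-z^{-3n-1}\right)q^{n(3n+1)/2}
\]
the vanishing factor is $(1-z^{-1})$, whose $z$-derivative at $z=1$ is $+1$, so the identity comes out with the correct sign directly; your constant-term comparison (the $n=0$ term is $1$ on both sides) is a sound independent way to pin this down, since the underlying quintuple product identity is true in exactly one orientation. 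One could ask for justification of differentiating an infinite product, but coefficientwise in $q$ each side is a Laurent polynomial in $z$, so the manipulation is rigorous. As for what each approach buys: the paper's citation keeps the exposition short and uniform with its treatment of Lemmas \ref{lemma1}, \ref{lemma2}, and \ref{lemma8}, while your derivation makes the lemma self-contained modulo the quintuple product identity, at the cost of carrying out the normalization bookkeeping you correctly identified as the delicate step.
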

\begin{proof}
See Berndt \cite{B}, Corollary 1.3.21 as well as Hirschhorn \cite{H}, (10.7.3).
\end{proof}

\begin{lemma}
\label{lemma4}
\begin{align}
    \frac{f_1^2}{f_2} & = \sum_{j=-\infty}^{\infty} (-1)^j q^{j^2}, \label{lem6.1} \\
    & = \frac{f_8^5}{f_4^2f_{16}^2} -2q\frac{f_{16}^2}{f_8}, \label{lem6.2} \\
    & = \frac{f_9^2}{f_{18}} -2q\frac{f_3f_{18}^2}{f_6f_9}. \label{lem6.3}
\end{align}
\end{lemma}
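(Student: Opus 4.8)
The plan is to establish \eqref{lem6.1} first from Jacobi's triple product identity, and then to obtain \eqref{lem6.2} and \eqref{lem6.3} by dissecting the resulting theta series according to the residue of the summation index modulo $2$ and modulo $3$, respectively. For \eqref{lem6.1} I would apply Jacobi's triple product
$$\sum_{j=-\infty}^{\infty} z^j q^{j^2} = \prod_{n\geq 1}(1-q^{2n})(1+zq^{2n-1})(1+z^{-1}q^{2n-1})$$
with $z=-1$. The right-hand side collapses to $\prod_{n\geq 1}(1-q^{2n})(1-q^{2n-1})^2$, and writing $\prod_{n\geq 1}(1-q^{2n-1}) = f_1/f_2$ and $\prod_{n\geq 1}(1-q^{2n}) = f_2$ turns this into $f_2\cdot(f_1/f_2)^2 = f_1^2/f_2$, which is \eqref{lem6.1}.

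For \eqref{lem6.2} I would split the sum in \eqref{lem6.1} according to the parity of $j$. Taking $j=2m$ contributes $\sum_m q^{4m^2}$, while $j=2m+1$ contributes $-q\sum_m q^{4m(m+1)}$. The first sum is $\varphi(q^4)$, where $\varphi(q)=\sum_{j}q^{j^2}$, and folding the second sum over the involution $m\mapsto -m-1$ shows it equals $2\psi(q^8)$, where $\psi(q)=\sum_{m\geq 0}q^{m(m+1)/2}$ is the series of Lemma \ref{lemma1}. Using the classical product form $\varphi(q)=f_2^5/(f_1^2f_4^2)$, so that $\varphi(q^4)=f_8^5/(f_4^2f_{16}^2)$, together with $\psi(q)=f_2^2/f_1$ from Lemma \ref{lemma1}, so that $\psi(q^8)=f_{16}^2/f_8$, then yields \eqref{lem6.2}.

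For \eqref{lem6.3} I would instead split the sum in \eqref{lem6.1} according to $j \bmod 3$. The terms with $j=3m$ give $\sum_m(-1)^m q^{9m^2}=\varphi(-q^9)=f_9^2/f_{18}$ by \eqref{lem6.1} applied with $q\mapsto q^9$. The terms with $j\equiv 1$ and $j\equiv -1\pmod 3$, written as $j=3m+1$ and $j=3m-1$, each contribute $-q\sum_m(-1)^m q^{9m^2\pm 6m}$, and the symmetry $m\mapsto -m$ shows the two are equal, giving a combined contribution of $-2q\sum_m(-1)^m q^{9m^2+6m}$. Recognizing this last sum as the Ramanujan theta function $f(-q^3,-q^{15})=\sum_m(-1)^m q^{9m^2+6m}$, a further application of Jacobi's triple product in the form $f(a,b)=(-a;ab)_\infty(-b;ab)_\infty(ab;ab)_\infty$ gives $(q^3;q^{18})_\infty(q^{15};q^{18})_\infty(q^{18};q^{18})_\infty$, which I would then convert to $f_3f_{18}^2/(f_6f_9)$.

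The main obstacle is precisely this last bookkeeping step in \eqref{lem6.3}: converting the triple product $(q^3;q^{18})_\infty(q^{15};q^{18})_\infty(q^{18};q^{18})_\infty$ into the compact quotient $f_3 f_{18}^2/(f_6 f_9)$. The key identity there is $(q^3;q^6)_\infty=(q^3;q^{18})_\infty(q^9;q^{18})_\infty(q^{15};q^{18})_\infty$, obtained by collecting the residue classes $3,9,15$ modulo $18$; this lets one write $(q^3;q^{18})_\infty(q^{15};q^{18})_\infty=(q^3;q^6)_\infty/(q^9;q^{18})_\infty=(f_3/f_6)/(f_9/f_{18})$, after which multiplying by $(q^{18};q^{18})_\infty=f_{18}$ gives the claimed form. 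The analogous conversions in \eqref{lem6.2} are comparatively immediate once the correct product forms for $\varphi$ and $\psi$ are in hand.
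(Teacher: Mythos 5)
Your proposal is correct, and every step checks: Jacobi's triple product with $z=-1$ gives \eqref{lem6.1}; the parity dissection of $\sum_j(-1)^jq^{j^2}$, combined with $\varphi(q)=f_2^5/(f_1^2f_4^2)$ and $\psi(q)=f_2^2/f_1$, gives \eqref{lem6.2}; and the mod-$3$ dissection, the identification $\sum_m(-1)^mq^{9m^2+6m}=f(-q^3,-q^{15})$, and the conversion $(q^3;q^{18})_\infty(q^{15};q^{18})_\infty(q^{18};q^{18})_\infty=f_3f_{18}^2/(f_6f_9)$ (your bookkeeping via the residue classes $3,9,15$ modulo $18$ is exactly right) give \eqref{lem6.3}. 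The paper takes a different route only in that it does not prove the lemma at all: it cites Berndt's equation (22.4) for \eqref{lem6.1}, notes that \eqref{lem6.2} follows from Hirschhorn's (1.9.4) --- which is the $2$-dissection $\varphi(q)=\varphi(q^4)+2q\psi(q^8)$, so that \eqref{lem6.2} is just this with $q\mapsto -q$ --- and cites Hirschhorn's (14.3.4) for \eqref{lem6.3}. Your argument is essentially the standard derivation lying behind those citations, so the mathematical content coincides; what differs is the trade-off. The paper's citations keep the exposition short, consistent with how it handles all of its auxiliary $q$-series facts, while your version makes the lemma self-contained from a single tool (the triple product) and even reuses \eqref{lem6.1} at $q\mapsto q^9$ to handle the $j\equiv 0\pmod 3$ piece, which is a pleasant economy. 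The one spot where you fall slightly short of full self-containedness is the unproved ``classical product form'' $\varphi(q)=f_2^5/(f_1^2f_4^2)$; to close that, either cite it (it is standard, e.g.\ in Hirschhorn \cite{H}) or derive it from the same triple product with $z=1$ together with Euler's identity $(-q;q^2)_\infty=f_2^2/(f_1f_4)$.
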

\begin{proof}
Identity \eqref{lem6.1} is equation (22.4) in Berndt \cite{B1}; see also Hirschhorn \cite{H}, (1.5.8). The 2-dissection \eqref{lem6.2} follows immediately from (1.9.4) in Hirschhorn \cite{H}. For a proof of \eqref{lem6.3} see \cite[(14.3.4)]{H}.
\end{proof}

%{\color{blue}
%\begin{lemma}
%\label{lemma5}
%\begin{equation*}
%\frac{f_1^2}{f_2} = \frac{f_8^5}{f_4^2f_{16}^2} %-2q\frac{f_{16}^2}{f_8}.
%\end{equation*}
%\end{lemma}
%\begin{proof}
%See Hirschhorn \cite[(1.9.4)]{H}.
%\end{proof}
%}

\begin{lemma}
\label{lemma6}
\begin{equation*}
f_1f_2 = \frac{f_6f_9^4}{f_3f_{18}^2} -qf_9f_{18} -2q^2\frac{f_3f_{18}^4}{f_6f_9^2}.
\end{equation*}
\end{lemma}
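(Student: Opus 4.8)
The statement is a $3$-dissection of $f_1 f_2$: the three terms on the right collect, respectively, the powers of $q$ congruent to $0$, $1$, and $2$ modulo $3$, since $\frac{f_6 f_9^4}{f_3 f_{18}^2}$ is a series in $q^3$, while $-qf_9f_{18}$ and $-2q^2\frac{f_3 f_{18}^4}{f_6 f_9^2}$ are $q$ and $q^2$ times series in $q^3$. My plan is to obtain this dissection as a product of two dissections that are already essentially in hand.

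The starting point is the factorization
$$
f_1 f_2 = \frac{f_1^2}{f_2}\cdot\frac{f_2^2}{f_1},
$$
which is immediate. For the first factor I would invoke the $3$-dissection \eqref{lem6.3} of Lemma \ref{lemma4}, namely $\frac{f_1^2}{f_2} = \frac{f_9^2}{f_{18}} - 2q\frac{f_3 f_{18}^2}{f_6 f_9}$. For the second factor I need the companion $3$-dissection of $\psi(q) := \frac{f_2^2}{f_1} = \sum_{m\geq 0} q^{m(m+1)/2}$ from Lemma \ref{lemma1}, which I claim is
$$
\frac{f_2^2}{f_1} = \frac{f_6 f_9^2}{f_3 f_{18}} + q\frac{f_{18}^2}{f_9}.
$$
To prove this I would split the sum $\sum_{m\geq 0} q^{m(m+1)/2}$ according to $m \bmod 3$: the terms with $m = 3j+1$ have exponent $9j(j+1)/2 + 1$, so they sum to $q\,\psi(q^9) = q\frac{f_{18}^2}{f_9}$, while the terms with $m \equiv 0, 2 \pmod 3$ reassemble into Ramanujan's theta value $f(q^3,q^6)$, which the Jacobi triple product rewrites as $\frac{f_6 f_9^2}{f_3 f_{18}}$. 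Alternatively, one may simply cite this standard $3$-dissection of $\psi$ from Hirschhorn \cite{H}.

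With both dissections available, the proof finishes by multiplying the two binomials and collecting. The four products are $\frac{f_9^2}{f_{18}}\cdot\frac{f_6 f_9^2}{f_3 f_{18}} = \frac{f_6 f_9^4}{f_3 f_{18}^2}$ and $-2q\frac{f_3 f_{18}^2}{f_6 f_9}\cdot q\frac{f_{18}^2}{f_9} = -2q^2\frac{f_3 f_{18}^4}{f_6 f_9^2}$, which are precisely the outer two terms of the claimed identity, together with the two cross terms $\frac{f_9^2}{f_{18}}\cdot q\frac{f_{18}^2}{f_9} = q f_9 f_{18}$ and $-2q\frac{f_3 f_{18}^2}{f_6 f_9}\cdot\frac{f_6 f_9^2}{f_3 f_{18}} = -2q f_9 f_{18}$, whose sum is $-q f_9 f_{18}$, the middle term. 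The only real obstacle is establishing the $\psi$ dissection: the $m \equiv 1$ class is transparent, but verifying that the $m \equiv 0$ and $m \equiv 2$ classes combine cleanly into $\frac{f_6 f_9^2}{f_3 f_{18}}$ requires the theta-function and triple-product identification above. Everything after that is routine cancellation, and the residue-class bookkeeping of the previous paragraph serves as a useful consistency check.
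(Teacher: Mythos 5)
Your proof is correct, and it supplies precisely what the paper suppresses: the paper's ``proof'' of Lemma~\ref{lemma6} is a bare citation to Hirschhorn and Sellers \cite{H-S}, with no argument given, whereas you actually derive the identity. Your route --- factoring $f_1f_2 = \frac{f_1^2}{f_2}\cdot\frac{f_2^2}{f_1}$, taking the $3$-dissection \eqref{lem6.3} already recorded in Lemma~\ref{lemma4} for the first factor, pairing it with the companion dissection $\frac{f_2^2}{f_1} = \frac{f_6f_9^2}{f_3f_{18}} + q\frac{f_{18}^2}{f_9}$, and multiplying the two binomials --- is essentially the standard derivation (and is in the spirit of the cited source, which works from the same classical dissections). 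All four products check out: the two ``pure'' terms give $\frac{f_6f_9^4}{f_3f_{18}^2}$ and $-2q^2\frac{f_3f_{18}^4}{f_6f_9^2}$, and the cross terms give $qf_9f_{18} - 2qf_9f_{18} = -qf_9f_{18}$, as you say. The one ingredient you rightly flag as nontrivial, the dissection of $\frac{f_2^2}{f_1}$, is handled correctly: splitting $\sum_{m\geq 0}q^{m(m+1)/2}$ by $m \bmod 3$, the $m\equiv 1$ class gives $q\frac{f_{18}^2}{f_9}$, while the $m\equiv 0$ and $m\equiv 2$ classes glue into the bilateral series $\sum_{j=-\infty}^{\infty}q^{3j(3j+1)/2}$ (the substitution $j\mapsto -j-1$ converts one class into the other), and the Jacobi triple product in the form $f(q,q^2) = \frac{f_2f_3^2}{f_1f_6}$, applied with $q$ replaced by $q^3$, yields $\frac{f_6f_9^2}{f_3f_{18}}$; alternatively this is (14.3.3) in Hirschhorn \cite{H}, so citing it would be exactly parallel to how the paper sources \eqref{lem6.3}. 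Your mod-$3$ residue-class bookkeeping is also sound and confirms that the three terms of the product are the $0$, $1$, $2$ components of the dissection, with both cross terms correctly landing in the residue-$1$ class.
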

\begin{proof}
See Hirschhorn and Sellers \cite{H-S}.
\end{proof}

%\begin{lemma}
%\label{lemma7}
%\begin{equation*}
%\frac{f_1^2}{f_2} = \frac{f_9^2}{f_{18}} %-2q\frac{f_3f_{18}^2}{f_6f_9}.
%\end{equation*}
%\end{lemma}
%\begin{proof}
%ee Hirschhorn \cite[(14.3.4)]{H}.
%\end{proof}

\begin{lemma}
\label{lemma9}
\begin{equation*}
\frac{1}{f_1^4} = \frac{f_4^{14}}{f_2^{14}f_8^4} +4q\frac{f_4^2f_{8}^4}{f_2^{10}}.
%\label{lm9}
\end{equation*}
\end{lemma}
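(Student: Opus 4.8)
The statement is a \textbf{$2$-dissection of $1/f_1^4$}: the first summand collects the even powers of $q$ and the second collects the odd powers. My plan is to force the dissecting variable to be $q^2$ from the outset, rather than dissecting $1/f_1^4$ directly. The device is to peel off one factor and write $\frac{1}{f_1^4}=\frac{1}{f_2^2}\cdot\frac{f_2^2}{f_1^4}$, and then to recognize the second factor as $\frac{f_2^2}{f_1^4}=\frac{1}{\varphi(-q)^2}$, where $\varphi(-q):=\sum_{n=-\infty}^{\infty}(-1)^nq^{n^2}=\frac{f_1^2}{f_2}$ is exactly the series appearing in \eqref{lem6.1} of Lemma~\ref{lemma4}. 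Thus everything reduces to finding the $2$-dissection of $1/\varphi(-q)^2$, after which I will multiply back by $1/f_2^2$ and convert to $f$-notation.

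The key step is to route the odd-power content through a product identity so that the denominator becomes a function of $q^2$ only. Using the classical evaluation $\varphi(q)\varphi(-q)=\varphi(-q^2)^2$ (equivalently $\frac{f_2^5}{f_1^2f_4^2}\cdot\frac{f_1^2}{f_2}=\frac{f_2^4}{f_4^2}=\bigl(\tfrac{f_2^2}{f_4}\bigr)^2$), I rewrite $\frac{1}{\varphi(-q)^2}=\frac{\varphi(q)^2}{\varphi(-q^2)^4}$. The denominator $\varphi(-q^2)^4$ is now purely a series in $q^2$, so the entire dissection is carried by the numerator $\varphi(q)^2$. Here I invoke the standard squared-theta dissection $\varphi(q)^2=\varphi(q^2)^2+4q\,\psi(q^4)^2$, where $\psi(q):=\sum_{n\geq0}q^{n(n+1)/2}=\frac{f_2^2}{f_1}$ is the series of Lemma~\ref{lemma1}; both of these $q$-series facts are recorded in Hirschhorn~\cite{H}. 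The even part of $1/f_1^4$ then equals $\frac{1}{f_2^2}\cdot\frac{\varphi(q^2)^2}{\varphi(-q^2)^4}$ and the odd part equals $\frac{1}{f_2^2}\cdot\frac{4q\,\psi(q^4)^2}{\varphi(-q^2)^4}$.

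The final step is routine eta-quotient bookkeeping. Substituting $\varphi(q^2)=\frac{f_4^5}{f_2^2f_8^2}$, $\varphi(-q^2)=\frac{f_2^2}{f_4}$, and $\psi(q^4)=\frac{f_8^2}{f_4}$, the even part collapses to $\frac{f_4^{14}}{f_2^{14}f_8^4}$ and the odd part to $4q\,\frac{f_4^2f_8^4}{f_2^{10}}$, which is precisely the claimed identity.

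I expect the genuine obstacle to be \emph{avoiding a circular argument} at the dissecting step. The tempting shortcut is to take the $2$-dissection of $f_2/f_1^2$ (immediate from \eqref{lem6.2} after $q\mapsto-q$) and simply square it; this does produce the correct odd part $4q\,\frac{f_4^2f_8^4}{f_2^{10}}$, but the resulting even part, when simplified in the base $q^2$, reproduces the Lemma~\ref{lemma9} statement itself, so that naive squaring is self-referential and proves nothing new. Passing instead through $\varphi(q)\varphi(-q)=\varphi(-q^2)^2$ together with the dissection of $\varphi(q)^2$ breaks this loop, because $\varphi(q)^2$ splits cleanly into a part in $q^2$ and $4q$ times a part in $q^2$ \emph{without} re-invoking the target identity. (Alternatively, one could embrace the self-similarity and promote the observed reduction $q\mapsto q^2$ into a descent/induction on the power of $q$, using that both sides are formal power series agreeing at $q^0$; but the direct route above is cleaner and avoids the inductive overhead.)
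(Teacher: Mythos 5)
Your proposal is correct, but it differs from the paper in kind: the paper gives no derivation at all, its ``proof'' being a one-line citation to identity (18) of Brietzke, da Silva, and Sellers \cite{BSS}. Your argument supplies an actual self-contained proof, and every step checks out. Writing $\varphi(-q)=f_1^2/f_2$ gives $1/f_1^4=1/\bigl(f_2^2\,\varphi(-q)^2\bigr)$; the product identity $\varphi(q)\varphi(-q)=\varphi(-q^2)^2$ (in eta-quotient form, $\frac{f_2^5}{f_1^2f_4^2}\cdot\frac{f_1^2}{f_2}=\frac{f_2^4}{f_4^2}$) correctly pushes all the dissecting work into the numerator $\varphi(q)^2$; and the classical split $\varphi(q)^2=\varphi(q^2)^2+4q\,\psi(q^4)^2$ (a consequence of $\varphi(q)^2+\varphi(-q)^2=2\varphi(q^2)^2$ and $\varphi(q)^2-\varphi(-q)^2=8q\,\psi(q^4)^2$, both recorded in Hirschhorn \cite{H}) finishes the dissection. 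The final bookkeeping with $\varphi(q^2)=\frac{f_4^5}{f_2^2f_8^2}$, $\varphi(-q^2)=\frac{f_2^2}{f_4}$, and $\psi(q^4)=\frac{f_8^2}{f_4}$ does reproduce $\frac{f_4^{14}}{f_2^{14}f_8^4}$ and $4q\,\frac{f_4^2f_8^4}{f_2^{10}}$ exactly, and your circularity caution is accurate: squaring the dissection of $\varphi(q)$ obtained from \eqref{lem6.2} under $q\mapsto -q$ yields the correct odd part, but verifying the even part reduces to the $\varphi^2$ dissection at $q^2$, i.e., to the target identity again, so routing through $\varphi(q)\varphi(-q)=\varphi(-q^2)^2$ is the right way to break that loop. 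As for what each approach buys: the paper's citation keeps the exposition short and is consistent with its practice of quoting classical dissections (Lemmas \ref{lemma1}--\ref{lemma_mod11_2} are all cited rather than proved), whereas your derivation makes the identity self-contained at the modest cost of two standard theta-function facts --- and it is essentially the canonical proof one would expect behind the cited source, so nothing is lost in rigor by either route.
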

\begin{proof}
This identity is (18) in Brietzke, da Silva, and Sellers \cite{BSS}.
\end{proof}

\begin{lemma}
\label{lemma_mod11_1}
$$
\frac{f_1^2 f_4^2}{f_2} = \sum_{m = -\infty}^{\infty} (3m+1)q^{3m^2+2m}.
$$
\end{lemma}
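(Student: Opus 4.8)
The plan is to first reduce the left-hand side to a more transparent shape using results already in hand, and then to recognize the right-hand side as a Ramanujan-type theta series with a linear multiplier, which I would produce from the quintuple product identity. Replacing $q$ by $q^2$ in Lemma~\ref{lemma1} gives $\frac{f_4^2}{f_2}=\sum_{m\ge0}q^{m(m+1)}=\psi(q^2)$, so that
$$\frac{f_1^2f_4^2}{f_2}=f_1^2\,\psi(q^2),$$
and, equivalently, by the first identity of Lemma~\ref{lemma4}, $\frac{f_1^2f_4^2}{f_2}=\varphi(-q)\,f_4^2$. Either way the target becomes an identity between a clean eta-quotient and the series $\sum_m(3m+1)q^{3m^2+2m}$. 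Note that the exponent satisfies $3m^2+2m=\tfrac13\big((3m+1)^2-1\big)$, so both the quadratic exponent and the multiplier $3m+1$ are governed by the residue class $3m+1$, which is exactly the signature of the quintuple product identity. This is the same mechanism that produces Lemma~\ref{lemma3}: that identity is, up to normalization, the $z$-derivative at $z=1$ of the quintuple product, whose series side $\sum_n(z^{3n}-z^{-3n-1})q^{n(3n+1)/2}$ contributes the multiplier $6n+1$.

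Concretely, I would introduce the two-variable theta function
$$G(z)=\sum_{m=-\infty}^{\infty}\big(z^{3m+1}-z^{-3m-1}\big)q^{3m^2+2m},$$
which is antisymmetric under $z\mapsto z^{-1}$, hence vanishes at $z=1$, and which satisfies $\tfrac12\,z\frac{d}{dz}G(z)\big|_{z=1}=\sum_m(3m+1)q^{3m^2+2m}$, the sought right-hand side. By Jacobi's triple product each of the two sums defining $G$ is an infinite product, and the quintuple product identity rewrites their difference as a single infinite product possessing a simple zero at $z=1$. Differentiating this product form and evaluating at $z=1$, only the factor responsible for the zero survives the product rule, so $G'(1)$ collapses to an explicit eta-quotient. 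Matching this against the series computation of $G'(1)$ would yield the identity, after simplifying the eta-quotient to $\frac{f_1^2f_4^2}{f_2}$ (equivalently $f_1^2\psi(q^2)$) by standard Pochhammer rearrangements.

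The step I expect to be the main obstacle is pinning down the correct specialization of the quintuple product identity so that the surviving product collapses \emph{exactly} to $\frac{f_1^2f_4^2}{f_2}$; getting the base, the power of $z$, and the prefactor to align is where the real bookkeeping lies, and a misaligned choice leaves behind a Lambert-series correction rather than a clean eta-quotient. As a safeguard I would verify the resulting identity through a reasonable number of coefficients directly from expansions of the type in \eqref{gf}, and, in the spirit of the other lemmas in this section, the cleanest exposition may well be to cite this theta identity from Hirschhorn~\cite{H}, where series of the form $\sum_m(3m+1)q^{3m^2+2m}$ are catalogued as eta-quotients.
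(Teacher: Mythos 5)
Your proposal is correct, but it does substantially more work than the paper itself, whose entire proof of this lemma is the one-line citation ``This result appears as (10.7.6) in Hirschhorn \cite{H}'' --- precisely the fallback you suggest in your closing sentence. That said, your sketch identifies exactly the mechanism underlying Hirschhorn's identity, and the bookkeeping you flag as the main obstacle does close up cleanly, with no Lambert-series remainder. Concretely: in the quintuple product identity
\[
\prod_{n\ge 1}(1-q^n)(1-zq^n)(1-z^{-1}q^{n-1})(1-z^2q^{2n-1})(1-z^{-2}q^{2n-1})
=\sum_{n=-\infty}^{\infty}q^{n(3n+1)/2}\bigl(z^{3n}-z^{-3n-1}\bigr),
\]
replace $q$ by $q^2$ and $z$ by $zq^{-1}$, and multiply both sides by $-z^{-1}$. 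Reindexing the two families of the series ($n\mapsto -n$ in one, $n\mapsto n-1$ followed by $n\mapsto -n$ in the other) produces exactly your
$G(z)=\sum_{m}\bigl(z^{3m+1}-z^{-3m-1}\bigr)q^{3m^2+2m}$, while the product side becomes
\[
(z-z^{-1})\prod_{n\ge 1}(1-q^{2n})(1-zq^{2n-1})(1-z^{-1}q^{2n-1})(1-z^{2}q^{4n})(1-z^{-2}q^{4n}),
\]
the factor $z-z^{-1}$ arising from the $n=1$ term $1-z^2q^{4n-4}$ of the shifted $z^2$-family combined with the prefactor $-z^{-1}$. The trailing product is nonzero at $z=1$, where it evaluates to $f_2\cdot(f_1/f_2)^2\cdot f_4^2=f_1^2f_4^2/f_2$, so your computation $\tfrac12\,G'(1)=\sum_m(3m+1)q^{3m^2+2m}$ yields the lemma exactly; your worry about a misaligned specialization leaving a Lambert correction is what would happen if you differentiated the two triple products separately, but the antisymmetrized $G$ with this specialization avoids it, since only the vanishing factor survives the product rule. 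Your preliminary rewritings via Lemmas \ref{lemma1} and \ref{lemma4} are correct but not needed. In short, the paper's citation buys brevity and consistency with its treatment of the other lemmas (all quoted from \cite{H} and \cite{B}), while your derivation buys self-containedness and an explanation of where the multiplier $3m+1$ comes from; it also delivers the companion Lemma \ref{lemma_mod11_2} ((10.7.7) in \cite{H}) by the analogous specialization with an alternating sign.
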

\begin{proof}
This result appears as (10.7.6) in Hirschhorn \cite{H}.  
\end{proof}

\begin{lemma}
\label{lemma_mod11_2}
$$
\frac{f_2^5}{f_1^2}= \sum_{m = -\infty}^{\infty} (-1)^m(3m+1)q^{3m^2+2m}.
$$
\end{lemma}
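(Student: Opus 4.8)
The plan is to derive this identity directly from the immediately preceding Lemma \ref{lemma_mod11_1} by the substitution $q \mapsto -q$. The two statements are visually almost identical: their right-hand sides differ only by the factor $(-1)^m$, and both left-hand sides are eta-quotient-type products. This strongly suggests that sending $q$ to $-q$ in Lemma \ref{lemma_mod11_1} produces exactly the present identity, so I would verify that both sides transform as desired.

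First I would dispose of the right-hand side. Writing the exponent as $3m^2+2m = m(3m+2)$ and reducing modulo $2$ gives $3m^2+2m \equiv m \pmod 2$, so that $(-q)^{3m^2+2m} = (-1)^m q^{3m^2+2m}$. Hence $q\mapsto -q$ turns the sum $\sum_m (3m+1)q^{3m^2+2m}$ of Lemma \ref{lemma_mod11_1} into $\sum_m (-1)^m(3m+1)q^{3m^2+2m}$, which is precisely the sum we want. Next I would record how the $f_r$ transform: $f_2=(q^2;q^2)_\infty$ and $f_4=(q^4;q^4)_\infty$ involve only even powers of $q$, hence are fixed by $q\mapsto -q$, while for $f_1$ I would use the standard product manipulation
\[
(-q;-q)_\infty = \prod_{m\ge1}(1-q^{2m})\prod_{m\ge1}(1+q^{2m-1}) = f_2\,(-q;q^2)_\infty = \frac{f_2^3}{f_1 f_4},
\]
where the last equality comes from $(-q;q^2)_\infty = (q^2;q^4)_\infty/(q;q^2)_\infty = (f_2/f_4)/(f_1/f_2) = f_2^2/(f_1 f_4)$. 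Thus under $q\mapsto -q$ we have $f_1 \mapsto f_2^3/(f_1 f_4)$, $f_2 \mapsto f_2$, and $f_4 \mapsto f_4$.

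Substituting these into the left-hand side of Lemma \ref{lemma_mod11_1} then gives
\[
\frac{f_1^2 f_4^2}{f_2} \;\longmapsto\; \frac{\bigl(f_2^3/(f_1 f_4)\bigr)^2 f_4^2}{f_2} = \frac{\bigl(f_2^6/(f_1^2 f_4^2)\bigr)\,f_4^2}{f_2} = \frac{f_2^5}{f_1^2},
\]
which is exactly the desired left-hand side. Equating the transformed sides of Lemma \ref{lemma_mod11_1} therefore yields the claimed identity. The only genuine obstacle is recalling (or rederiving, as above) the transformation $f_1 \mapsto f_2^3/(f_1 f_4)$ under $q\mapsto -q$; once that is in hand, everything reduces to the elementary parity check on the exponent and a direct simplification of the product.
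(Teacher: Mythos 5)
Your proposal is correct, and it takes a different route from the paper: the paper's ``proof'' is a one-line citation to (10.7.7) in Hirschhorn's book, whereas you actually derive the identity from Lemma \ref{lemma_mod11_1} (itself cited as (10.7.6) in the same source) via the substitution $q \mapsto -q$. All of your steps check out: the parity computation $3m^2+2m \equiv m^2 \equiv m \pmod 2$ correctly produces the $(-1)^m$ factor; the transformation $f_1 \mapsto f_2^3/(f_1 f_4)$ is the standard one, and your rederivation of it via $(-q;q^2)_\infty = f_2^2/(f_1 f_4)$ is sound; and the product simplification $\bigl(f_2^3/(f_1 f_4)\bigr)^2 f_4^2/f_2 = f_2^5/f_1^2$ is exactly right. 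What your approach buys is self-containedness: it exposes the two cited lemmas as a single identity in disguise, reducing the paper's reliance on the literature from two references to one, and it is the likely mechanism by which (10.7.7) follows from (10.7.6) in Hirschhorn's book anyway. The only thing to note is that your argument is conditional on Lemma \ref{lemma_mod11_1}, which remains a citation, so you have not given an independent proof of the pair---but within the paper's framework, where that lemma is already accepted, your derivation is complete.
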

\begin{proof}
This result appears as (10.7.7) in Hirschhorn \cite{H}.  
\end{proof}

\section{Elementary Proofs of Several Congruences from Andrews and Paule}

In their work \cite{ap13}, Andrews and Paule used significant tools based on the work of Smoot, which are derived from modular forms, in order to prove a number of  congruences for the functions $d_2$ and $d_3.$  In this section, we wish to prove the majority of those results from Andrews and Paule using very elementary tools.  

\begin{theorem}[Corollary 5, \cite{ap13}]
\label{APCor5} 
For all $n\geq 0,$ $d_2(3n+2)\equiv 0\pmod{3}.$
\end{theorem}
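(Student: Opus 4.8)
The plan is to reduce the generating function \eqref{gf} modulo $3$ and then perform a $3$-dissection in which the relevant residue class simply never appears. For $k=2$, equation \eqref{gf} gives
$$
\sum_{n=0}^{\infty} d_2(n)q^n = \frac{f_2^2}{f_1^7}.
$$
The first step is to invoke the ``freshman's dream'' congruence $f_r^3\equiv f_{3r}\pmod 3$, which follows from $(1-q^{rj})^3\equiv 1-q^{3rj}\pmod 3$. Applying this with $r=1$ yields $f_1^6=(f_1^3)^2\equiv f_3^2\pmod 3$, so that
$$
\frac{f_2^2}{f_1^7}=\frac{f_2^2}{f_1}\cdot\frac{1}{f_1^6}\equiv\frac{f_2^2}{f_1}\cdot\frac{1}{f_3^2}\pmod 3.
$$

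Next I would apply Lemma~\ref{lemma1} to rewrite the first factor as a sum over triangular numbers, obtaining
$$
\sum_{n=0}^{\infty} d_2(n)q^n\equiv\frac{1}{f_3^2}\sum_{m\geq 0}q^{m(m+1)/2}\pmod 3.
$$
The factor $1/f_3^2$ is a power series in $q^3$, so every exponent it contributes is divisible by $3$; consequently the residue class modulo $3$ of any exponent appearing on the right is determined entirely by the triangular exponent $m(m+1)/2$ coming from the second factor. The crux of the argument is therefore the elementary observation that triangular numbers never lie in the residue class $2$ modulo $3$: since $m(m+1)/2$ depends only on $m\bmod 3$ (one checks $m(m+1)/2$ is congruent to $0,1,0$ for $m\equiv 0,1,2\pmod 3$), its value modulo $3$ is always $0$ or $1$.

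From this it follows that the product $\frac{1}{f_3^2}\sum_{m\geq 0}q^{m(m+1)/2}$ has no term whose exponent is congruent to $2$ modulo $3$, and hence the coefficient of $q^{3n+2}$ on the left is $\equiv 0\pmod 3$ for every $n\geq 0$, which is exactly the claim. I do not anticipate a genuine obstacle here; the only point requiring care is the bookkeeping that justifies reducing the residue calculation to the triangular exponents alone, and the single arithmetic fact that $m(m+1)/2\not\equiv 2\pmod 3$ does all the real work.
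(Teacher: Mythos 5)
Your proposal is correct and follows the paper's proof essentially verbatim: the same factorization $\frac{f_2^2}{f_1^7}=\frac{f_2^2}{f_1}\cdot\frac{1}{f_1^6}$, the same reduction $f_1^6\equiv f_3^2\pmod 3$, and the same use of Lemma~\ref{lemma1}. The only (cosmetic) difference is at the final step, where the paper completes the square and notes that $2$ is a quadratic nonresidue modulo $3$, while you verify directly that $m(m+1)/2\bmod 3$ depends only on $m\bmod 3$ and is never $2$ --- two phrasings of the same arithmetic fact.
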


\begin{proof}
We have the following:
\begin{eqnarray*}
\sum_{n=0}^{\infty} d_2(n)q^n 
&=& 
\frac{f_2^2}{f_1^{7}} \\
&=& 
\frac{f_2^2}{f_1}\frac{1}{f_1^6}\\
&\equiv& 
\frac{1}{f_3^2}
\left( 
\sum_{m\geq 0}q^{m(m+1)/2} \right)  \pmod{3}
\end{eqnarray*}
using Lemma \ref{lemma1}.  
Now we simply need to determine whether $3n+2=m(m+1)/2$ for some $m$ and $n.$  Completing the square means this is equivalent to determining whether 
$$8(3n+2)+1 = (2m+1)^2$$
or 
$$24n+17 = (2m+1)^2$$ 
or 
$$2 \equiv (2m+1)^2 \pmod{3}.$$  
This congruence never holds because 2 is a quadratic nonresidue modulo 3.  Our result follows.  
\end{proof}

\begin{theorem}[Corollary 10, \cite{ap13}]  
\label{APCor10}
For all $n\geq 0,$ $d_3(2n+1)\equiv 0\pmod{2}.$
\end{theorem}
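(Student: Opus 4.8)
The plan is to study the generating function $\sum_{n\geq 0} d_3(n)q^n = \dfrac{f_2^3}{f_1^{10}}$ entirely modulo $2$ and to show that, after reduction, it becomes a power series in $q^2$ alone; this forces every odd-indexed coefficient to be even. The first step I would take is to exploit the elementary parity congruence $f_2 \equiv f_1^2 \pmod 2$, which follows from $(1-q^n)^2 = 1 - 2q^n + q^{2n} \equiv 1 - q^{2n} \pmod 2$ applied factor by factor. Using this, I can rewrite
\[
\frac{f_2^3}{f_1^{10}} \equiv \frac{f_1^6}{f_1^{10}} = \frac{1}{f_1^4} \pmod 2 .
\]
So the whole problem reduces to understanding the $2$-dissection of $1/f_1^4$ modulo $2$.

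Next I would invoke Lemma~\ref{lemma9}, which is precisely a $2$-dissection of $1/f_1^4$:
\[
\frac{1}{f_1^4} = \frac{f_4^{14}}{f_2^{14}f_8^4} + 4q\,\frac{f_4^2 f_8^4}{f_2^{10}} .
\]
The key observation is that the second summand carries an explicit factor of $4$, so it vanishes modulo $2$, while the first summand $\dfrac{f_4^{14}}{f_2^{14}f_8^4}$ is built entirely from $f_2$, $f_4$, and $f_8$ and is therefore a power series in $q^2$. Hence
\[
\sum_{n\geq 0} d_3(n)q^n \equiv \frac{1}{f_1^4} \equiv \frac{f_4^{14}}{f_2^{14}f_8^4} \pmod 2 ,
\]
and the right-hand side contains no odd powers of $q$. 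Extracting the coefficient of $q^{2n+1}$ on both sides yields $d_3(2n+1) \equiv 0 \pmod 2$ for all $n \geq 0$, as desired.

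I expect the only real obstacle to be recognizing that a $2$-dissection of $1/f_1^4$ is exactly what is needed and that one already has such a dissection on hand; once Lemma~\ref{lemma9} is brought in, the argument is immediate, since the parity of the integer coefficient $4$ does all the work of annihilating the odd part. No case analysis or quadratic-residue argument (as in the proof of Theorem~\ref{APCor5}) is required here, because the dissection separates the odd powers of $q$ cleanly into a single term whose coefficient is divisible by $2$.
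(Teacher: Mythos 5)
Your proof is correct, and it shares its engine with the paper's proof --- the parity congruence $f_2 \equiv f_1^2 \pmod 2$ --- but you apply it in the opposite direction and consequently need an extra tool. The paper reduces the \emph{denominator}: $f_1^{10} \equiv f_2^5 \pmod 2$, so $\frac{f_2^3}{f_1^{10}} \equiv \frac{1}{f_2^2} \pmod 2$, which is visibly a series in $q^2$, and the proof is finished in one line. You instead reduce the \emph{numerator}, $f_2^3 \equiv f_1^6 \pmod 2$, landing on $\frac{1}{f_1^4}$, which is not visibly even in $q$, and you then invoke the exact $2$-dissection of Lemma~\ref{lemma9} to see that its odd part carries a factor of $4$. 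That works, but the dissection is heavier machinery than the situation demands: one more application of the same parity congruence gives $\frac{1}{f_1^4} \equiv \frac{1}{f_2^2} \pmod 2$ directly, with no dissection needed. (In the paper, Lemma~\ref{lemma9} earns its keep in the mod-$8$ arguments for $d_7$, where the factor of $4$ in the second term genuinely matters; modulo $2$ its content collapses to the trivial statement that $1/f_1^4$ is congruent to an even series.) So the trade-off is: your route demonstrates that a ready-made dissection can substitute for repeated parity reduction, while the paper's route shows that for a modulus of $2$ the reduction $f_1^2 \equiv f_2$ alone suffices --- the simpler of the two whenever the target modulus is $2$ rather than a higher power.
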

\begin{proof}
Note that 
\begin{eqnarray*}
\sum_{n=0}^{\infty} d_3(n)q^n 
&=& 
\frac{f_2^3}{f_1^{10}} \\
&\equiv& 
\frac{f_2^3}{f_2^5} \pmod{2}\\
&\equiv& 
\frac{1}{f_2^2} \pmod{2}.
\end{eqnarray*}
Since $\frac{1}{f_2^2}$ is an even function of $q,$ the result follows.
\end{proof}

\begin{theorem}[Corollary 12, \cite{ap13}]  
\label{APCor12}
For all $n\geq 0,$ $d_3(4n+2)\equiv 0\pmod{2}.$
\end{theorem}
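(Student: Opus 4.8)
The plan is to push the reduction used in the proof of Theorem \ref{APCor10} one step further. First I would recall the elementary mod-$2$ identity $(1-q^r)^2 \equiv 1-q^{2r} \pmod 2$, which gives $f_r^2 \equiv f_{2r} \pmod 2$ for every $r$. Writing $f_1^{10} = (f_1^2)^5 \equiv f_2^5 \pmod 2$, this reproduces exactly the reduction from the previous proof:
\[
\sum_{n=0}^{\infty} d_3(n)q^n = \frac{f_2^3}{f_1^{10}} \equiv \frac{f_2^3}{f_2^5} = \frac{1}{f_2^2} \pmod 2.
\]

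The new ingredient is simply to apply the same congruence once more, now to the denominator $f_2^2$. Since $f_2^2 \equiv f_4 \pmod 2$, we obtain
\[
\sum_{n=0}^{\infty} d_3(n)q^n \equiv \frac{1}{f_2^2} \equiv \frac{1}{f_4} \pmod 2.
\]
Because $\tfrac{1}{f_4}$ is a power series in $q^4$ alone, every coefficient of $q^m$ with $m \not\equiv 0 \pmod 4$ is forced to be even. As $4n+2 \not\equiv 0 \pmod 4$, the desired conclusion $d_3(4n+2) \equiv 0 \pmod 2$ follows at once.

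There is essentially no obstacle here: the whole argument rests on iterating the congruence $f_r^2 \equiv f_{2r} \pmod 2$, and the conclusion is read off from the fact that $\tfrac{1}{f_4}$ supports only exponents divisible by $4$. I would remark that this in fact proves the stronger statement $d_3(m) \equiv 0 \pmod 2$ for \emph{every} $m$ with $4 \nmid m$, which simultaneously recovers Theorem \ref{APCor10} as the special case of odd $m$.
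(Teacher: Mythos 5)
Your proposal is correct and matches the paper's proof exactly: both reduce the generating function to $1/f_2^2 \equiv 1/f_4 \pmod 2$ via the congruence $f_r^2 \equiv f_{2r} \pmod 2$ and conclude from the fact that $1/f_4$ is a series in $q^4$. Your closing remark that the argument actually gives $d_3(m) \equiv 0 \pmod 2$ for all $m$ with $4 \nmid m$ is a nice observation, implicit but not stated in the paper.
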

\begin{proof}
Thanks to the proof of Theorem \ref{APCor10}, we know 
\begin{eqnarray*}
\sum_{n=0}^{\infty} d_3(n)q^n 
&\equiv& 
\frac{1}{f_2^2} \pmod{2} \\
&\equiv &
\frac{1}{{f_4}} \pmod{2} 
\end{eqnarray*}
Since $\frac{1}{{f_4}}$ is a function of $q^4,$ the result follows.
\end{proof}	

\begin{theorem}[Corollary 13, \cite{ap13}]  
\label{APCor13}
For all $n\geq 0,$ $d_3(4n+3)\equiv 0\pmod{4}.$
\end{theorem}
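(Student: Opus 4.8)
The plan is to work modulo $4$ throughout and to extract from the generating function \eqref{gf} with $k=3$ the coefficients of the exponents $q^{4n+3}$. The starting point is the rewriting
\[
\sum_{n=0}^\infty d_3(n)q^n = \frac{f_2^3}{f_1^{10}} = f_1^2 f_2^3 \left(\frac{1}{f_1^4}\right)^3,
\]
which is engineered so that Lemma~\ref{lemma9} applies cleanly. Indeed, since the correction term in Lemma~\ref{lemma9} carries an explicit factor of $4$, cubing annihilates every cross term, giving $\left(\frac{1}{f_1^4}\right)^3 \equiv \left(\frac{f_4^{14}}{f_2^{14}f_8^4}\right)^3 \pmod 4$, and hence
\[
\sum_{n=0}^\infty d_3(n)q^n \equiv \frac{f_1^2 f_4^{42}}{f_2^{39} f_8^{12}} \pmod 4.
\]

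First I would isolate the odd part. The only factor above that is not already a series in $q^2$ is $f_1^2$, so I would substitute the $2$-dissection \eqref{lem6.2} of $f_1^2/f_2$ from Lemma~\ref{lemma4}. This writes the right-hand side as a difference of two terms: one of the form $\frac{f_4^{40}}{f_2^{38}f_8^7 f_{16}^2}$, which is a series in $q^2$ and therefore contributes nothing to any odd exponent, and a second term $-2q\,\frac{f_4^{42}f_{16}^2}{f_2^{38}f_8^{13}}$ which, being $q$ times a series in $q^2$, supplies all of the odd-exponent coefficients. Consequently every odd-index coefficient already carries one visible factor of $2$, and in particular
\[
d_3(4n+3) \equiv -2\,[q^{4n+2}]\frac{f_4^{42}f_{16}^2}{f_2^{38}f_8^{13}} \pmod 4.
\]

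To finish, I must produce a second factor of $2$, i.e.\ show that the indicated coefficient is even. Writing $Q = q^2$, the series $\frac{f_4^{42}f_{16}^2}{f_2^{38}f_8^{13}}$ becomes $H(Q) = \frac{f_2^{42}f_8^2}{f_1^{38}f_4^{13}}$, and the coefficient of $q^{4n+2}$ equals the coefficient of $Q^{2n+1}$, an odd index. The key step is a reduction of $H$ modulo $2$: repeatedly using $f_a^2 \equiv f_{2a} \pmod 2$ to collapse all even powers, I expect $H(Q)$ to simplify to $\frac{f_{16}}{f_2 f_4} \pmod 2$. Since each of $f_2, f_4, f_{16}$ is a series in $Q^2$, this reduced form is itself a series in $Q^2$, so all of its odd-index coefficients vanish modulo $2$; thus $[Q^{2n+1}]H(Q)$ is even, the promised second factor of $2$ appears, and $d_3(4n+3) \equiv 0 \pmod 4$ follows.

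The main obstacle is precisely this final mod-$2$ collapse: the $2$-dissection of $f_1^2$ only guarantees divisibility by $2$, so the full strength of the result rests on verifying that the residual series $H$ reduces modulo $2$ to something supported on even indices. Reaching that point depends on choosing the decomposition $1/f_1^{10} = f_1^2\,(1/f_1^4)^3$ — so that Lemma~\ref{lemma9} can be invoked to the third power modulo $4$ — rather than any other bookkeeping, and then carrying the eta-quotient simplification far enough that the surviving quotient is manifestly a function of $Q^2$.
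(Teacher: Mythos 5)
Your proposal is correct and takes essentially the same route as the paper: isolate the factor $f_1^2/f_2$, apply the $2$-dissection \eqref{lem6.2} to make every odd-index coefficient carry a visible factor of $2$, and then collapse the residual eta-quotient modulo $2$ (your $H(Q)\equiv \frac{f_{16}}{f_2f_4}\equiv f_2^5 \pmod 2$ is the same terminal object as the paper's $2f_2^5$) to a series in even powers. The only cosmetic difference is that you handle $1/f_1^{12}$ modulo $4$ by cubing Lemma \ref{lemma9}, where the paper simply uses $f_1^4\equiv f_2^2 \pmod 4$; both steps are valid and your heavier bookkeeping all checks out.
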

\begin{proof}
Using the generating function for $d_3$, we have 
\begin{eqnarray*}
\sum_{n=0}^{\infty} d_3(n)q^n 
&=& 
\frac{f_2^3}{f_1^{10}} = \frac{f_2^4}{f_1^{12}} \frac{f_1^2}{f_2} \\
&\equiv& 
\frac{f_2^4}{f_2^6}\frac{f_1^2}{f_2} \pmod{4}\\
&=& 
\frac{1}{f_2^2}\left( \frac{f_8^5}{f_4^2f_{16}^2} -2q\frac{f_{16}^2}{f_8} \right)   
\end{eqnarray*}
using \eqref{lem6.2} in Lemma \ref{lemma4} above. Extracting the odd parts, dividing by $q$ and replacing $q^2$ by $q$, we are left with
$$\sum_{n=0}^{\infty} d_3(2n+1)q^n \equiv 2\frac{f_8^2}{f_1^2f_4} \equiv 2f_2^5 \pmod{4}.$$
Since $f_2$ is a function of $q^2$ the result follows.
\end{proof}

\begin{theorem}[Corollary 14, \cite{ap13}]  
\label{APCor14}
For all $n\geq 0,$ $d_3(5n+1)\equiv 0\pmod{5}.$
\end{theorem}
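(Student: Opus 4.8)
The plan is to reduce the generating function modulo $5$ to something whose relevant coefficients are transparent. Starting from \eqref{gf} with $k=3$, I would write
$$
\sum_{n=0}^{\infty} d_3(n)q^n = \frac{f_2^3}{f_1^{10}} = \frac{f_2^3}{(f_1^5)^2}.
$$
Since $(1-q^m)^5 \equiv 1-q^{5m} \pmod 5$ for every $m$, we have $f_1^5 \equiv f_5 \pmod 5$, and therefore
$$
\sum_{n=0}^{\infty} d_3(n)q^n \equiv \frac{f_2^3}{f_5^2} \pmod 5.
$$
The factor $\tfrac{1}{f_5^2}$ is a function of $q^5$, so it only contributes exponents divisible by $5$ and cannot change the residue class mod $5$ of any exponent it multiplies. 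Hence the coefficient of $q^{5n+1}$ on the right is controlled entirely by those terms of $f_2^3$ whose exponent is $\equiv 1 \pmod 5$.

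The key step is to expand $f_2^3$ explicitly. Applying Lemma \ref{lemma2} with $q$ replaced by $q^2$ gives
$$
f_2^3 = \sum_{m\geq 0}(-1)^m(2m+1)q^{m(m+1)}.
$$
I would then examine $m(m+1) \pmod 5$ as $m$ ranges over residues $0,1,2,3,4$, obtaining the values $0,2,1,2,0$ respectively. Thus the exponent $m(m+1)$ is $\equiv 1 \pmod 5$ precisely when $m \equiv 2 \pmod 5$. For every such $m$, the coefficient $2m+1 \equiv 2\cdot 2 + 1 = 5 \equiv 0 \pmod 5$, so all these terms vanish modulo $5$. Combining this with the observation that $\tfrac{1}{f_5^2}$ preserves residue classes mod $5$, the coefficient of $q^{5n+1}$ in $\tfrac{f_2^3}{f_5^2}$ is divisible by $5$, and the result follows.

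The genuinely interesting point — rather than an obstacle — is the arithmetic coincidence that the \emph{unique} residue class $m \equiv 2 \pmod 5$ producing exponent $\equiv 1 \pmod 5$ is exactly the class on which the weight $2m+1$ is annihilated mod $5$; this is what makes the elementary argument succeed without any dissection. The only care needed is to confirm that no other residue class of $m$ contributes to $q^{5n+1}$, which the short table above settles, and to note that the reduction $f_1^5 \equiv f_5$ is applied to the square $(f_1^5)^2 = f_1^{10}$ so that the full power $f_1^{10}$ is accounted for.
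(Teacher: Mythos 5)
Your proposal is correct and follows essentially the same route as the paper: reduce $f_1^{10}\equiv f_5^2 \pmod 5$, expand $f_2^3$ via Lemma \ref{lemma2} with $q\mapsto q^2$, and observe that every term with exponent $\equiv 1\pmod 5$ carries a coefficient $2m+1\equiv 0\pmod 5$. The only cosmetic difference is that you identify the offending class $m\equiv 2\pmod 5$ by a direct residue table, whereas the paper completes the square to get $(2m+1)^2\equiv 0\pmod 5$ and concludes $5\mid 2m+1$; the two finishes are equivalent.
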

\begin{proof}
Using the generating function for $d_3$ which we have seen above, we have 
\begin{eqnarray*}
\sum_{n=0}^{\infty} d_3(n)q^n 
&=& 
\frac{f_2^3}{f_1^{10}} \\
&\equiv& 
\frac{f_2^3}{f_5^2} \pmod{5}\\
&=& 
\frac{1}{f_5^2}\left( \sum_{m=0}^\infty (-1)^m(2m+1)q^{m(m+1)} \right)   
\end{eqnarray*}
using Lemma \ref{lemma2} above. We now need to ask whether $5n+1$ can be represented as $m(m+1),$ and this is equivalent to asking whether $4(5n+1)+1$ or $20n+5$ can be represented as $(2m+1)^2.$  If this is the case, then we know 
$$
(2m+1)^2 = 20n+5 \equiv 0 \pmod{5}
$$
which implies that $2m+1 \equiv 0 \pmod{5}.$  Thanks to the presence of the coefficient of $2m+1$ in front of the term $q^{m(m+1)}$ in the series above, and the fact that this $2m+1$ must be divisible by 5, we know that our result follows.  
\end{proof}

\begin{theorem}[Corollary 15, \cite{ap13}]  
\label{APCor15}
For all $n\geq 0,$ 
\begin{eqnarray*}
d_3(5n+3) &\equiv& 0\pmod{5}, \\
d_3(5n+4) &\equiv& 0\pmod{5}. \\
\end{eqnarray*}
\end{theorem}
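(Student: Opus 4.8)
The plan is to continue the approach used in the proof of Theorem~\ref{APCor14}, working modulo $5$ with the same starting reduction. Following that proof, I would begin with
\begin{equation*}
\sum_{n=0}^{\infty} d_3(n)q^n \equiv \frac{1}{f_5^2}\sum_{m=0}^{\infty}(-1)^m(2m+1)q^{m(m+1)} \pmod{5},
\end{equation*}
obtained from Lemma~\ref{lemma2}. Since $1/f_5^2$ is a function of $q^5$, it contributes nothing to the residue class of the exponent modulo $5$. Hence the coefficient of $q^N$ on the right, modulo $5$, is controlled entirely by which values $N$ arise in the form $m(m+1)$ after shifting by a multiple of $5$. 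Concretely, I would argue that $d_3(5n+r) \equiv 0 \pmod 5$ for a given residue $r$ provided that either $5n+r$ cannot be written as $m(m+1)$ plus a multiple of $5$ (i.e. $4r+1$ is a quadratic nonresidue modulo $5$), or whenever it can be so written the accompanying factor $2m+1$ is forced to be divisible by $5$.

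The key computational step is therefore to determine, for each residue $r \in \{0,1,2,3,4\}$, the behavior of $m(m+1) \pmod 5$. First I would complete the square: $4\cdot m(m+1)+1 = (2m+1)^2$, so $m(m+1) \equiv c \pmod 5$ is solvable exactly when $4c+1$ is a square modulo $5$. Computing $m(m+1) \bmod 5$ as $m$ ranges over residues gives the attainable values $\{0, 2, 1\}$ (from $m \equiv 0,4$ giving $0$; $m\equiv 1,3$ giving $2$; $m \equiv 2$ giving $1$), so the triangular-type values $m(m+1)$ never fall in the residue classes $3$ or $4$ modulo $5$. This immediately yields both congruences: for $N \equiv 3$ or $N \equiv 4 \pmod 5$, there is no contribution whatsoever to the coefficient of $q^N$ in $\sum_{m\geq 0}(-1)^m(2m+1)q^{m(m+1)}$, and multiplying by the $q^5$-series $1/f_5^2$ cannot move mass into those classes. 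Thus $d_3(5n+3) \equiv d_3(5n+4) \equiv 0 \pmod 5$.

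I would present this as a single unified argument rather than two separate cases, noting that $3$ and $4$ are precisely the two residues modulo $5$ not represented by $m(m+1)$. The main subtlety to handle carefully is the bookkeeping that multiplying by $1/f_5^2$ preserves residue classes modulo $5$: since every exponent appearing in $1/f_5^2$ is divisible by $5$, the product's coefficient of $q^N$ is a $5$-linear combination of coefficients $[q^{N'}]\sum_{m\geq 0}(-1)^m(2m+1)q^{m(m+1)}$ with $N' \equiv N \pmod 5$, and each such coefficient vanishes when $N \equiv 3,4 \pmod 5$. I do not anticipate a genuine obstacle here; the only thing requiring care is verifying the quadratic-residue count, which is the same completing-the-square maneuver already used in Theorems~\ref{APCor5} and~\ref{APCor14}, so the argument is entirely elementary and self-contained.
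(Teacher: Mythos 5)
Your proposal is correct and follows essentially the same route as the paper: both start from the reduction $\sum d_3(n)q^n \equiv f_5^{-2}\sum_{m\geq 0}(-1)^m(2m+1)q^{m(m+1)} \pmod 5$ via Lemma~\ref{lemma2} and then complete the square to observe that $20n+13$ and $20n+17$ reduce to the quadratic nonresidues $3$ and $2$ modulo $5$, which is exactly your residue computation for $m(m+1) \bmod 5$ in disguise. Your explicit bookkeeping for multiplication by the $q^5$-series $1/f_5^2$ is a point the paper leaves implicit, but it is the same argument.
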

\begin{proof}
In the proof of Theorem \ref{APCor14}, we noted that 
\begin{eqnarray*}
\sum_{n=0}^{\infty} d_3(n)q^n 
&\equiv& 
\frac{1}{{f_5}^2}\left( \sum_{m=0}^\infty (-1)^m(2m+1)q^{m(m+1)} \right)  \pmod{5}. 
\end{eqnarray*}
We now need to ask whether $5n+3$ can be represented as $m(m+1),$ and this is equivalent to asking whether $4(5n+3)+1$ or $20n+13$ can be represented as $(2m+1)^2.$  This would mean that 
$(2m+1)^2 \equiv 3 \pmod{5}.$  
However, since 3 is a quadratic nonresidue modulo 5, we know that this cannot be the case.  Similarly, note that 
$$
4(5n+4)+1 = 20n+17 \equiv 2 \pmod{5}
$$
and 2 is the other quadratic nonresidue modulo 5.   The two congruences which appear in the statement of the theorem follow.  
\end{proof}

\section{New individual Congruences}

In this section, we state and prove a set of new individual congruences. We begin with an extremely unexpected congruence modulo 11 satisfied by the function $d_2$ (one of the functions which received a great deal of attention in Andrews and Paule \cite{ap13} and was the primary focus in Smoot's recent work \cite{Sm}).   

\begin{theorem} 
\label{theorem_d2_mod11}
For all $n\geq 0,$ $d_2(11n+7) \equiv 0 \pmod{11}.$
\end{theorem}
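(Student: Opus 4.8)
The plan is to follow the same template used throughout Section 2: reduce the generating function $\frac{f_2^2}{f_1^7}$ modulo $11$ to an explicit theta-type series, extract the relevant arithmetic progression, and identify a coefficient that is manifestly divisible by $11$. Since we are working modulo $11$ and $d_2$ has generating function $\frac{f_2^2}{f_1^7}$, the natural first move is to use the binomial-type congruence $f_1^{11} \equiv f_{11}$ and $f_2^{11}\equiv f_{22} \pmod{11}$. I would write $\frac{f_2^2}{f_1^7} = \frac{f_2^2 f_1^4}{f_1^{11}} \equiv \frac{f_2^2 f_1^4}{f_{11}} \pmod{11}$, so that the $11$-dependent factor $\frac{1}{f_{11}}$ is a function of $q^{11}$ and may be carried along passively when dissecting modulo $11$.

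This reduces the problem to understanding the coefficients of $f_1^4 f_2^2$ on arithmetic progressions modulo $11$. The cleanest approach is to write $f_1^4 f_2^2 = \left(\frac{f_1^2 f_4^2}{f_2}\right)\left(\frac{f_2^5}{f_1^2}\right)\cdot\frac{f_2}{f_4^2}\cdot f_1^2$, but a better regrouping is to use exactly the two series the authors have pre-positioned, namely Lemmas \ref{lemma_mod11_1} and \ref{lemma_mod11_2}, whose product is
\begin{equation*}
\frac{f_1^2 f_4^2}{f_2}\cdot\frac{f_2^5}{f_1^2} = f_2^4 f_4^2 = \left(\sum_{i}(3i+1)q^{3i^2+2i}\right)\left(\sum_{j}(-1)^j(3j+1)q^{3j^2+2j}\right).
\end{equation*}
Thus I would aim to express $f_1^4 f_2^2$ (or a closely related product that agrees on the needed residue class) in terms of $f_2^4 f_4^2$ times a simple correction, reducing everything to a double sum $\sum_{i,j}(-1)^j(3i+1)(3j+1)q^{3i^2+2i+3j^2+2j}$. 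The exponents are governed by the quadratic form $3i^2+2i+3j^2+2j$; completing the square gives $3(i^2+j^2)+2(i+j)$, and multiplying by $3$ yields $(3i+1)^2+(3j+1)^2 - 2 \equiv (3i+1)^2+(3j+1)^2 \pmod{\text{stuff}}$. I would track the exponent modulo $11$ and isolate the progression corresponding to $n\equiv 7$, i.e. the exponent $\equiv 7 \pmod{11}$ after accounting for the shift from the passive factor $\frac{1}{f_{11}}$.

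The main obstacle will be the final divisibility argument. Unlike the earlier mod $5$ results, where a single linear coefficient $2m+1$ forced divisibility, here each surviving term carries a product of two linear coefficients $(3i+1)(3j+1)$, so I cannot simply read off a factor of $11$. Instead I expect the argument to be a pairing/symmetry argument: on the residue class $3i^2+2i+3j^2+2j \equiv c \pmod{11}$ corresponding to $11n+7$, the solutions $(i,j)$ should pair up so that contributions cancel in pairs modulo $11$, or else the constraint forces $(3i+1)(3j+1)\equiv 0 \pmod{11}$ outright. Concretely, I would determine the residues of $3i^2+2i$ modulo $11$, check which pairs of residues sum to the target, and verify that for every such admissible pair either one of $3i+1, 3j+1$ vanishes mod $11$ or the involution $(i,j)\mapsto(j,i)$ (or a sign-flipping reflection coming from the $(-1)^j$) cancels the terms. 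Establishing that this cancellation is exact — and handling any fixed points of the involution separately — is the delicate computational heart of the proof, and it is where I would spend the most care before declaring the congruence $d_2(11n+7)\equiv 0 \pmod{11}$.
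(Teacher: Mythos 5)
There is a genuine gap --- in fact two. First, you never actually bridge from $f_1^4f_2^2$ to the product $f_2^4f_4^2$ that Lemmas \ref{lemma_mod11_1} and \ref{lemma_mod11_2} cover. Your proposed regrouping leaves behind the nontrivial factor $\frac{f_1^4}{f_2^2f_4^2}$, and no ``simple correction'' exists: the two eta-quotients are not equal, they are dilates of one another. The move the paper makes is to replace $q$ by $q^2$ in the whole congruence, so that $\frac{f_1^4f_2^2}{f_{11}}$ becomes $\frac{f_2^4f_4^2}{f_{22}}$ and $d_2(11n+7)$ is read off as the coefficient of $q^{2(11n+7)}$; since $1/f_{22}$ only shifts exponents by multiples of $22\equiv 0\pmod{11}$, everything reduces to asking when $3j^2+2j+3k^2+2k\equiv 14\pmod{22}$, i.e.\ when $2(11n+7)$ is so represented. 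Without this dilation your double sum simply does not represent the series you need to dissect.

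Second, and more seriously, you leave the divisibility step unresolved and your proposed fallback would fail. The involution $(i,j)\mapsto(j,i)$ does not cancel terms: it sends $(-1)^j(3i+1)(3j+1)$ to $(-1)^i(3j+1)(3i+1)$, which cancels only when $i$ and $j$ have opposite parity and \emph{doubles} the contribution otherwise, so no pairing argument of this shape can work. The actual argument is a clean quadratic-residue computation, not a cancellation: completing the square in $2(11n+7)=3j^2+2j+3k^2+2k$ gives $(6j+2)^2+(6k+2)^2\equiv 0\pmod{11}$, hence $(3j+1)^2+(3k+1)^2\equiv 0\pmod{11}$; since $-1$ is a quadratic nonresidue modulo $11$ (the residues are $1,3,4,5,9$, no two of which sum to $0$ modulo $11$), this forces $11\mid(3j+1)$ \emph{and} $11\mid(3k+1)$, so every surviving coefficient $(-1)^j(3j+1)(3k+1)$ is divisible by $11$ (indeed by $121$) term by term. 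You correctly located the opening reduction $\frac{f_2^2}{f_1^7}\equiv\frac{f_1^4f_2^2}{f_{11}}\pmod{11}$ and the two relevant lemmas, but the two steps you flagged as the ``delicate heart'' are exactly the ones the proof needs, and the mechanism you sketched for the second one is wrong.
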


\begin{proof} 
We begin by noting that 
\begin{eqnarray*}
\sum_{n=0}^{\infty} d_2(n)q^n 
&=& 
\frac{f_2^2}{f_1^{7}} \\
&=& 
\frac{f_1^4f_2^2}{f_1^{11}}\\
&\equiv & 
\frac{f_1^4f_2^2}{f_{11}} \pmod{11}.
\end{eqnarray*} 
Replacing $q$ by $q^2$ yields 
\begin{eqnarray*}
\sum_{n=0}^{\infty} d_2(n)q^{2n} 
&\equiv & 
\frac{f_2^4f_4^2}{f_{22}} \pmod{11}.
\end{eqnarray*}
Hence, in order to prove our result, we simply need to focus on $f_2^4f_4^2$ mod 11.  
Thanks to Lemma \ref{lemma_mod11_1} and Lemma \ref{lemma_mod11_2}, we know 

\begin{eqnarray*}
f_2^4f_4^2
&= &
\frac{f_2^5}{f_1^2}\frac{f_1^2f_4^2}{f_2} \\
&= &
\sum_{j = -\infty}^{\infty} (-1)^j(3j+1)q^{3j^2+2j}
\sum_{k = -\infty}^{\infty} (3k+1)q^{3k^2+2k}. 
\end{eqnarray*}
Thus, at this point, we need to ask whether $2(11n+7)$ can be represented as 
$$2(11n+7) = 3j^2+2j+3k^2+2k.$$
This is equivalent to 
$$24(11n+7)+8 = (6j+2)^2+(6k+2)^2$$ 
which implies 
$$(6j+2)^2+(6k+2)^2 \equiv 0 \pmod{11}$$
and this implies 
$$(3j+1)^2+(3k+1)^2 \equiv 0 \pmod{11}.$$
Thus, if there is a representation of $2(11n+7)$ as $3j^2+2j+3k^2+2k,$ then we know that $11$ divides $3j+1$ and $11$ divides $3k+1.$  (Assume this is not true.  Then neither $3j+1$ nor $3k+1$ is divisible by 11. However, we know that the quadratic residues modulo 11 are 1, 3, 4, 5, and 9. So, $(3j+1)^2+(3k+1)^2 \not\equiv 0 \pmod{11},$ which is a contradiction.)  Thus, we now know that the coefficient of  $q^{3j^2+2j+3k^2+2k}$ in the series representation for $f_2^4f_4^2,$ which is $(-1)^j(3j+1)(3k+1),$ must be congruent to 0 modulo 11.  
\end{proof}

We next prove a theorem connecting $d_\ell(n)$ and $p(n)$, for each prime $\ell$, where $p(n)$ denotes the number of unrestricted partitions of $n.$ This theorem provides congruences for $d_5, d_7,$ and $d_{11}$ which are closely related to Ramanujan's congruences for $p(n).$

\begin{theorem}
\label{R1}
Let $\ell$ be a prime and let $r$, $1\leq r\leq \ell -1,$ be an integer such that $p(\ell n+r) \equiv 0 \pmod{\ell}$, for all $n \geq 0$. Then, for all $n \geq 0$,
\begin{equation*}
d_{\ell}(\ell n+r) \equiv 0 \pmod{\ell}.
\end{equation*}
\end{theorem}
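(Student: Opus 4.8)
The plan is to reduce the generating function for $d_\ell$ modulo $\ell$ to a product of the partition generating function $1/f_1$ with a series supported on powers of $q^\ell$, and then to exploit the fact that convolving with such a series preserves residue classes modulo $\ell$.

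First I would apply the Frobenius congruence $f_r^\ell \equiv f_{r\ell} \pmod \ell$, which follows from $(1-q^j)^\ell \equiv 1 - q^{j\ell} \pmod \ell$. Writing $f_1^{3\ell+1} = (f_1^\ell)^3 f_1$, the generating function \eqref{gf} then gives
\begin{equation*}
\sum_{n=0}^\infty d_\ell(n) q^n = \frac{f_2^\ell}{(f_1^\ell)^3 f_1} \equiv \frac{f_{2\ell}}{f_\ell^3}\cdot\frac{1}{f_1} \pmod \ell.
\end{equation*}

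The key observation is that $\frac{f_{2\ell}}{f_\ell^3}$ is a power series in $q^\ell$ alone, say $\frac{f_{2\ell}}{f_\ell^3} = \sum_{m\geq 0} g(m) q^{\ell m}$ with integer coefficients $g(m)$. Since $\frac{1}{f_1} = \sum_{j\geq 0} p(j) q^j$, comparing coefficients of $q^{\ell n + r}$ yields
\begin{equation*}
d_\ell(\ell n + r) \equiv \sum_{m\geq 0} g(m)\, p\bigl(\ell n + r - \ell m\bigr) = \sum_{m\geq 0} g(m)\, p\bigl(\ell(n-m) + r\bigr) \pmod \ell.
\end{equation*}

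Finally I would observe that every surviving term on the right is divisible by $\ell$. For $m \le n$ the argument $\ell(n-m)+r$ is a nonnegative integer congruent to $r$ modulo $\ell$ with $1 \le r \le \ell - 1$, so $p(\ell(n-m)+r) \equiv 0 \pmod \ell$ by hypothesis; for $m > n$ the argument is negative and $p$ vanishes. Hence $d_\ell(\ell n + r) \equiv 0 \pmod \ell$. There is no real obstacle here beyond recognizing that the factor $\frac{f_{2\ell}}{f_\ell^3}$ involves only powers of $q^\ell$; this is precisely what forces each convolution term to land in the residue class $r$ modulo $\ell$ (or at a negative argument), allowing the Ramanujan-type hypothesis on $p$ to transfer verbatim to $d_\ell$.
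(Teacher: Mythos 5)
Your proposal is correct and follows exactly the paper's own argument: reduce $f_2^{\ell}/f_1^{3\ell+1}$ modulo $\ell$ via the Frobenius congruence $f_r^{\ell}\equiv f_{r\ell}\pmod{\ell}$ to obtain $\frac{f_{2\ell}}{f_\ell^3}\cdot\frac{1}{f_1}$, then use the fact that $f_{2\ell}/f_\ell^3$ is a series in $q^{\ell}$ so the hypothesis on $p(\ell n+r)$ transfers directly. The only difference is that you spell out the convolution step which the paper compresses into ``the result follows.''
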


\begin{proof}
The generating function for $d_{\ell}(n)$ satisfies 
\begin{align*}
 \sum_{n=0}^{\infty}d_{\ell}(n)q^{n} & = \frac{f_2^{\ell}}{f_1^{3\ell +1}} \equiv \frac{f_{2\ell}}{f_\ell^3}\frac{1}{f_1} \\
 & \equiv \frac{f_{2\ell}}{f_\ell^3} \sum_{n=0}^{\infty}p(n)q^n \pmod{\ell}.
\end{align*}
Since $f_{2\ell}/f_{\ell}^3$ is a function of $q^\ell$ and $p(\ell n+r) \equiv 0 \pmod{\ell}$, the result follows.
\end{proof}

\begin{corollary}
\label{Cn1}
For all $n\geq 0,$
\begin{eqnarray*}
d_5(5n+4) &\equiv & 0 \pmod{5}, \\
d_7(7n+5) &\equiv & 0 \pmod{7}, \\
d_{11}(11n+6) &\equiv & 0 \pmod{11}.
\end{eqnarray*}
\end{corollary}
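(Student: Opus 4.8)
The plan is to obtain all three congruences as immediate instances of Theorem \ref{R1}. That theorem reduces any statement of the form $d_\ell(\ell n + r) \equiv 0 \pmod{\ell}$ to the corresponding divisibility of the ordinary partition function, namely $p(\ell n + r) \equiv 0 \pmod{\ell}$, provided $\ell$ is prime and $1 \le r \le \ell - 1$. Thus for each of the primes $\ell = 5, 7, 11$ I need only exhibit an admissible residue $r$ for which a known congruence supplies the required divisibility of $p$.

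For this I would invoke Ramanujan's three classical partition congruences,
\[
p(5n+4) \equiv 0 \pmod{5}, \qquad p(7n+5) \equiv 0 \pmod{7}, \qquad p(11n+6) \equiv 0 \pmod{11},
\]
each valid for all $n \ge 0$. The residues $r = 4, 5, 6$ respectively satisfy the constraint $1 \le r \le \ell - 1$, since $4 \le 4$, $5 \le 6$, and $6 \le 10$; hence every pair $(\ell, r) \in \{(5,4),(7,5),(11,6)\}$ meets the hypotheses of Theorem \ref{R1}.

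Applying Theorem \ref{R1} once for each of these three pairs then yields
\[
d_5(5n+4) \equiv 0 \pmod{5}, \qquad d_7(7n+5) \equiv 0 \pmod{7}, \qquad d_{11}(11n+6) \equiv 0 \pmod{11},
\]
as claimed. There is no genuine obstacle beyond correctly pairing each prime with its Ramanujan congruence and confirming that the relevant residue lies in the permitted range; all of the substantive work has already been absorbed into Theorem \ref{R1}, whose proof carries out the generating-function reduction $f_2^{\ell}/f_1^{3\ell+1} \equiv (f_{2\ell}/f_\ell^3)\sum_{n\ge0}p(n)q^n \pmod{\ell}$. The only point worth verifying with care is that the case $\ell = 11$ is covered — that is, recalling that $p(11n+6) \equiv 0 \pmod{11}$ is indeed among Ramanujan's congruences — so that the corollary genuinely produces a statement for $d_{11}$ and not merely for $d_5$ and $d_7$.
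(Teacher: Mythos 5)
Your proposal is correct and is precisely the paper's intended argument: the corollary is stated immediately after Theorem \ref{R1} with no separate proof, since it follows by instantiating that theorem at the pairs $(\ell,r) = (5,4), (7,5), (11,6)$ given by Ramanujan's classical congruences for $p(n)$. Your checks that each residue lies in the range $1 \le r \le \ell-1$ match the hypotheses exactly, so nothing is missing.
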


Next, we focus our attention specifically on the function $d_7(n).$
\begin{theorem}  For all $n\geq 0,$
\begin{eqnarray}
d_7(4n+2) &\equiv& 0\pmod{4}, \label{8.4} \\
d_7(8n+5) &\equiv& 0\pmod{4}, \label{8.7}  \\
d_7(16n+9) &\equiv& 0\pmod{4}, \label{8.12} \\
d_7(4n+3) &\equiv& 0\pmod{8}, \label{8.5} \\
d_7(8n+4) &\equiv& 0\pmod{8}. \label{8.6}
\end{eqnarray}
\end{theorem}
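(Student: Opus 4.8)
The plan is to reduce the entire problem to a single \emph{master congruence} valid modulo $8$, from which all five statements fall out by repeated $2$-dissection. Starting from $\sum_n d_7(n)q^n = f_2^7/f_1^{22}$, I would write $f_2^7/f_1^{22} = f_1^2\cdot f_2^7/f_1^{24}$ and invoke the standard fact $f_1^{2^k}\equiv f_2^{2^{k-1}}\pmod{2^k}$, so that $f_1^{24}=(f_1^8)^3\equiv f_2^{12}\pmod 8$. Since $f_2^{12}$ is invertible with unit constant term, the reciprocal may be replaced modulo $8$, giving the master congruence $\sum_n d_7(n)q^n \equiv \frac{1}{f_2^4}\cdot\frac{f_1^2}{f_2}\pmod 8$. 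Everything that follows is an exercise in extracting arithmetic progressions from this one expression, and because the congruence already holds modulo $8$, the mod-$4$ claims \eqref{8.4}, \eqref{8.7}, \eqref{8.12} and the mod-$8$ claims \eqref{8.5}, \eqref{8.6} are handled in one sweep.

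The engine is the $2$-dissection \eqref{lem6.2} of $f_1^2/f_2$ from Lemma \ref{lemma4}, together with the dissection of $1/f_1^4$ in Lemma \ref{lemma9}. Substituting \eqref{lem6.2} into the master congruence and separating even and odd powers of $q$ splits the problem in two. The odd part, after the substitution $q^2\mapsto q$, becomes $\sum_n d_7(2n+1)q^n\equiv -2\,\frac{1}{f_1^4}\frac{f_8^2}{f_4}\pmod 8$; feeding in Lemma \ref{lemma9} turns the surviving $4q$-term into a multiple of $8$, leaving $-2$ times a function of $q^2$. Its odd part is therefore $\equiv 0\pmod 8$, which is exactly \eqref{8.5}, while its even part yields $\sum_m d_7(4m+1)q^m\equiv -2\,\frac{f_2^{13}}{f_1^{14}f_4^2}\pmod 8$; reducing the quotient modulo $2$ collapses it to $f_4$, a function of $q^4$, so the coefficient is even unless $4\mid m$, and this gives both \eqref{8.7} and \eqref{8.12}. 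The even part of the master congruence is treated identically: one finds $\sum_n d_7(2n)q^n\equiv \frac{1}{f_1^4}\frac{f_4^5}{f_2^2f_8^2}\pmod 8$, whose Lemma \ref{lemma9} expansion has odd part $4q\,\frac{f_4^7f_8^2}{f_2^{12}}$, delivering \eqref{8.4}, and even part $\frac{f_2^{19}}{f_1^{16}f_4^6}\equiv \frac{f_2^{11}}{f_4^6}\pmod 8$ after using $f_1^{16}\equiv f_2^8$; the latter is a function of $q^2$, so its odd part vanishes, giving \eqref{8.6}.

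The main obstacle is not any single identity but the careful bookkeeping of how the powers of $2$ interact with the dissections across (up to) two iterations: I must track that the $-2q$ and $4q$ factors coming from \eqref{lem6.2} and Lemma \ref{lemma9} combine with the mod-$8$ reductions so that exactly the right power of $2$ survives in each progression, and that at each stage the residual quotient is genuinely a function of $q^2$ (or reduces modulo $2$ to a function of $q^4$), so that the relevant odd or non-aligned coefficients vanish. The one place demanding extra care is the passage from mod $8$ to mod $2$ in the $d_7(4m+1)$ computation, where the overall factor $-2$ means I only need the quotient modulo $2$; verifying $f_2^{13}/(f_1^{14}f_4^2)\equiv f_4\pmod 2$ (via $f_1^{14}\equiv f_2^7$ and $f_2^6\equiv f_4^3$) is the crux that simultaneously produces \eqref{8.7} and \eqref{8.12}.
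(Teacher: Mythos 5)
Your proposal is correct and takes essentially the same approach as the paper: the same factorization $\frac{f_2^7}{f_1^{22}}=\frac{f_2^8}{f_1^{24}}\cdot\frac{f_1^2}{f_2}$, the same reduction to the master congruence $\frac{1}{f_2^4}\frac{f_1^2}{f_2} \pmod 8$ as in \eqref{8.16}, and the same two dissection tools \eqref{lem6.2} and Lemma \ref{lemma9}. The only cosmetic difference is that you derive the mod-$4$ results \eqref{8.4}, \eqref{8.7}, \eqref{8.12} from the single mod-$8$ expression (obtaining \eqref{8.7} and \eqref{8.12} simultaneously from $2f_4$ being a function of $q^4$), whereas the paper runs a parallel mod-$4$ computation from \eqref{8.8} and uses one extra dissection step to reach \eqref{8.12}.
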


\begin{proof}
Using the generating function for $d_7(n)$ and \eqref{lem6.2}, it follows that
\begin{align}
\sum_{n=0}^{\infty}d_7(n)q^n & = \frac{f_2^7}{f_1^{22}} = \frac{f_2^8}{f_1^{24}}\frac{f_1^2}{f_2} \label{8.15} \\
& \equiv \frac{1}{f_4^2}\left( \frac{f_8^5}{f_4^2f_{16}^2} -2q\frac{f_{16}^2}{f_8} \right) \pmod{4}. \label{8.8}
\end{align}
Extracting the even parts from \eqref{8.8} and replacing $q^2$ by $q$, we are left with
\begin{equation*}
\sum_{n=0}^{\infty}d_7(2n)q^n \equiv \frac{f_4^5}{f_2^4f_{8}^2} \equiv \frac{1}{f_4} \pmod{4},
%\label{8.9}
\end{equation*}
from which \eqref{8.4} follows since $1/f_4$ is a function of $q^4$.

Extracting the odd parts of \eqref{8.8}, dividing by $q$ and replacing $q^2$ by $q$, we obtain
\begin{equation}
\sum_{n=0}^{\infty}d_7(2n+1)q^n \equiv 2\frac{f_8^2}{f_2^2f_4} \equiv 2f_8 \pmod{4}.    
\label{8.10}
\end{equation}

%The even part of \eqref{8.10} is
This means 
\begin{equation}
\sum_{n=0}^{\infty}d_7(4n+1)q^n \equiv 2{f_4} \pmod{4},
\label{8.13}
\end{equation}
from which \eqref{8.7} follows since $f_4$ is a function of $q^4.$ 

%We note that extracting the even part of \eqref{8.13} yields
In a similar way, \eqref{8.13} implies
$$\sum_{n=0}^{\infty}d_7(8n+1)q^n \equiv 2{f_2} \pmod{4}.$$
Since $f_2$ is a function of $q^2$, \eqref{8.12} follows.

In order to prove \eqref{8.5} and \eqref{8.6}, we take \eqref{8.15} modulo $8$:
\begin{equation}
\sum_{n=0}^{\infty}d_7(n)q^n \equiv \frac{1}{f_2^4}\frac{f_1^2}{f_2} \pmod{8}.        
\label{8.16}
\end{equation}

Thanks to \eqref{lem6.2} in Lemma \ref{lemma4}, we see that the odd part of \eqref{8.16} is
\begin{eqnarray*}
\sum_{n=0}^{\infty}d_7(2n+1)q^n &\equiv & -2\frac{f_8^2}{f_1^4f_4} \pmod{8} \\
&=& -2\frac{f_8^2}{f_4}\left( \frac{f_4^{14}}{f_2^{14}f_8^4} +4q\frac{f_4^2f_{8}^4}{f_2^{10}} \right)  
\end{eqnarray*}
using Lemma \ref{lemma9}.  
%Due to the factor 4 in \eqref{lm9}, 
It follows that
$$\sum_{n=0}^{\infty}d_7(4n+3)q^n \equiv 0 \pmod{8},$$
which proves \eqref{8.5}.

Using \eqref{lem6.2}, we can extract the even part of \eqref{8.16}:
$$\sum_{n=0}^{\infty}d_7(2n)q^n \equiv \frac{f_4^5}{f_1^4f_2^2f_8^2} \pmod{8}.$$
Now, using Lemma \ref{lemma9} we extract the even part of the last expression to obtain
$$\sum_{n=0}^{\infty}d_7(4n)q^n \equiv \frac{f_2^{19}}{f_1^{16}f_4^6} \equiv \frac{f_2^{11}}{f_4^6} \pmod{8},$$
from which \eqref{8.6} follows.
\end{proof}

\begin{theorem}
Let $p \geq 5$ be a prime and let $r$, $1 \leq r \leq p-1$, be such that $3r+1$ is a quadratic nonresidue modulo $p$.  Then, for all $n \geq 0$,
$$d_{7}(2pn+2r+1) \equiv 0 \pmod{4}.$$
\end{theorem}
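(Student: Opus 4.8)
The plan is to build directly on equation \eqref{8.10}, which has already established the congruence
$$\sum_{n=0}^{\infty} d_7(2n+1)q^n \equiv 2f_8 \pmod{4}.$$
Since the target index factors as $2pn+2r+1 = 2(pn+r)+1$, proving the theorem reduces to a single statement about a coefficient of $f_8$: it suffices to show that the coefficient of $q^{pn+r}$ in $f_8$ vanishes whenever $3r+1$ is a quadratic nonresidue modulo $p$. Indeed, if that coefficient is $0$, then $d_7\big(2(pn+r)+1\big) \equiv 2\cdot 0 \equiv 0 \pmod 4$, which is exactly the claimed congruence.

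First I would apply the pentagonal number theorem (Lemma \ref{lemma8}) with $q$ replaced by $q^8$ to write $f_8 = \sum_{m=-\infty}^{\infty} (-1)^m q^{4m(3m-1)}$. Thus a nonzero coefficient of $q^N$ in $f_8$ forces $N = 4m(3m-1)$ for some integer $m$. The key elementary step is then the completing-the-square identity
$$3\cdot 4m(3m-1) + 1 = 36m^2 - 12m + 1 = (6m-1)^2,$$
so that every exponent $N$ occurring in $f_8$ satisfies $3N+1 = (6m-1)^2$, a perfect square.

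Now I would specialize to $N = pn+r$. If the coefficient of $q^{pn+r}$ in $f_8$ were nonzero, then $3(pn+r)+1 = (6m-1)^2$ for some $m$; reducing modulo $p$ gives $3r+1 \equiv (6m-1)^2 \pmod p$, so $3r+1$ would be a quadratic residue (or $0$) modulo $p$, contradicting the hypothesis. Hence the coefficient of $q^{pn+r}$ in $f_8$ is $0$ and the congruence $d_7(2pn+2r+1)\equiv 0 \pmod 4$ follows. The hypothesis $p \geq 5$ enters precisely to guarantee that $6$ is invertible modulo $p$, so that $6m-1$ ranges over all residue classes and the quadratic-residue dichotomy for $3r+1$ is meaningful; this is the only point requiring care, and I anticipate no serious obstacle beyond correctly inheriting the reduction \eqref{8.10} and carrying out the short completing-the-square computation.
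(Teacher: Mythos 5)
Your proposal is correct and follows essentially the same route as the paper: both start from \eqref{8.10}, expand $f_8$ via Lemma \ref{lemma8} as $\sum_{m=-\infty}^{\infty}(-1)^m q^{4m(3m-1)}$, complete the square to get $3(pn+r)+1=(6m-1)^2$, and derive a contradiction with $3r+1$ being a quadratic nonresidue modulo $p$. The only cosmetic difference is that the paper drops the sign $(-1)^m$ (harmless since $\pm 2$ agree modulo $4$), whereas you retain it; your argument is complete as stated.
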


\begin{proof}
Thanks to Lemma \ref{lemma8}, we can rewrite \eqref{8.10} as
\begin{equation}
\sum_{n=0}^{\infty}d_7(2n+1)q^n \equiv 2 \sum_{m = -\infty}^{\infty}q^{4m(3m-1)} \pmod{4}.    
\label{8.11}
\end{equation}
If $pn+r = 4m(3m-1)$, completing square we obtain $3pn+3r+1 = (6m-1)^2$, from which it follows that $3r+1 \equiv (6m-1)^2 \pmod{p}$. Since $3r+1$ is a quadratic nonresidue modulo $p$, there is no $m$ such that $pn+r = 4m(3m-1)$. Thus, the coefficient of $q^{pn+r}$ on the right-hand side of \eqref{8.11} is $0$ modulo $4$, which implies $d_{7}(2(pn+r)+1) \equiv 0 \pmod{4}.$
\end{proof}

We next consider a pair of congruences satisfied by the function $d_8(n).$

\begin{theorem} For all $n \geq 0$,
\begin{eqnarray}
d_8(3n+2) &\equiv& 0\pmod{9}, \label{8.1} \\
d_8(9n+3) &\equiv& 0\pmod{9}. \label{8.2} 
\end{eqnarray}
\end{theorem}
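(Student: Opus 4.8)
The plan is to follow the same generating-function philosophy used throughout this section, working modulo $9$ and exploiting the fact that expressions like $f_6^3/f_3^9$ are series in $q^3$. The starting point is $\sum_{n\ge 0} d_8(n)q^n = \frac{f_2^8}{f_1^{25}}$. To render this tractable modulo $9$, I would use the standard power-of-$3$ congruences $f_1^9 \equiv f_3^3 \pmod 9$ and $f_2^9 \equiv f_6^3 \pmod 9$ (both immediate from $f_1^3\equiv f_3\pmod 3$ together with the binomial theorem). Writing $\frac{f_2^8}{f_1^{25}} = \frac{f_1^2 f_2^8}{f_1^{27}}$, replacing $f_1^{27}\equiv f_3^9$ and $f_2^8 = f_2^9/f_2 \equiv f_6^3/f_2$, and then invoking \eqref{lem6.1} of Lemma \ref{lemma4}, the generating function collapses to
\begin{equation*}
\sum_{n\ge 0} d_8(n)q^n \equiv \frac{f_6^3}{f_3^9}\cdot\frac{f_1^2}{f_2} \equiv \frac{f_6^3}{f_3^9}\sum_{j=-\infty}^{\infty}(-1)^j q^{j^2} \pmod 9.
\end{equation*}
This single displayed identity is the engine for both congruences, since $f_6^3/f_3^9$ is a function of $q^3$.

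For \eqref{8.1}, I would simply read off the residues of the exponents modulo $3$. Because $f_6^3/f_3^9$ contributes only exponents divisible by $3$, a term $q^{3n+2}$ can arise solely from a square with $j^2 \equiv 2 \pmod 3$; but squares are congruent to $0$ or $1$ modulo $3$, so no such term exists and $d_8(3n+2)\equiv 0\pmod 9$ follows at once.

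The congruence \eqref{8.2} requires one further dissection, and this iteration is where the real work lies. Extracting from the displayed identity the part supported on exponents $\equiv 0 \pmod 3$ forces $3\mid j$; writing $j=3i$ and replacing $q^3$ by $q$ yields
\begin{equation*}
\sum_{n\ge 0} d_8(3n)q^n \equiv \frac{f_2^3}{f_1^9}\sum_{i=-\infty}^{\infty}(-1)^i q^{3i^2} \equiv \frac{f_2^3}{f_3^3}\sum_{i=-\infty}^{\infty}(-1)^i q^{3i^2} \pmod 9,
\end{equation*}
using $f_1^9\equiv f_3^3\pmod 9$ again. Now $d_8(9n+3)$ is the coefficient of $q^{3n+1}$ on the left, and on the right both $1/f_3^3$ and $\sum_i (-1)^i q^{3i^2}$ are series in $q^3$, so every exponent $\equiv 1 \pmod 3$ must be supplied by $f_2^3$ alone. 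Applying Lemma \ref{lemma2} with $q$ replaced by $q^2$ gives $f_2^3 = \sum_{m\ge 0}(-1)^m(2m+1)q^{m(m+1)}$, and checking $m(m+1)\pmod 3$ shows it takes only the values $0$ and $2$. Hence $f_2^3$ carries no exponent $\equiv 1 \pmod 3$, the coefficient of $q^{3n+1}$ vanishes modulo $9$, and \eqref{8.2} follows.

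The main obstacle I anticipate is bookkeeping rather than conceptual: one must be careful that the two congruences $f_1^9\equiv f_3^3$ and $f_2^9\equiv f_6^3$ are applied correctly modulo $9$ (not merely modulo $3$), and that the second $3$-dissection is performed on the right expression so that the decisive factor reduces precisely to $f_2^3$, whose triangular-type exponents $m(m+1)$ happen to avoid the residue class $1 \pmod 3$. Once the dissections are set up so that both obstructing series ($\sum_j(-1)^j q^{j^2}$ and $f_2^3$) land in the "forbidden" residue argument, both congruences drop out immediately.
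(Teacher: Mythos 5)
Your proof is correct and follows the paper's argument essentially step for step: the same factorization $\frac{f_2^8}{f_1^{25}} = \frac{f_2^9}{f_1^{27}}\cdot\frac{f_1^2}{f_2}$, the same reduction modulo $9$ via $f_1^9 \equiv f_3^3$ and $f_2^9 \equiv f_6^3$, a $3$-dissection of $f_1^2/f_2$, and the identical finish using Lemma \ref{lemma2} together with $m(m+1)\not\equiv 1 \pmod 3$. The only difference is cosmetic: you perform the $3$-dissection directly from the theta series \eqref{lem6.1} using $j^2\equiv 0,1\pmod 3$, whereas the paper quotes the product-form dissection \eqref{lem6.3} --- which is exactly the $3$-dissection of \eqref{lem6.1}, your extracted factor $\sum_{i}(-1)^i q^{3i^2}$ being $f_3^2/f_6$ in disguise, so the two computations agree term by term.
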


\begin{proof}
Using the generating function for $d_8(n)$ and \eqref{lem6.3} in Lemma \ref{lemma4}, we obtain
\begin{align}
\sum_{n=0}^{\infty}d_8(n)q^n & = \frac{f_2^8}{f_1^{25}} = \frac{f_2^9}{f_1^{27}}\frac{f_1^2}{f_2} \nonumber \\
& \equiv \frac{f_6^3}{f_3^9} \left( \frac{f_9^2}{f_{18}} -2q\frac{f_3f_{18}^2}{f_6f_9} \right)  \pmod{9} \nonumber \\
& = \frac{f_6^3f_9^2}{f_3^9f_{18}} -2q\frac{f_6^2f_{18}^2}{f_3^8f_9}, \label{8.3}
\end{align}
from which we immediately deduce \eqref{8.1}. 

We now extract the terms of the form $q^{3n}$ from both sides of \eqref{8.3} to obtain
$$\sum_{n=0}^{\infty}d_8(3n)q^{3n} \equiv \frac{f_6^3f_9^2}{f_3^9f_{18}} \pmod{9}.$$
Replacing $q^3$ by $q$, we are left with
\begin{align*}
\sum_{n=0}^{\infty}d_8(3n)q^{n} & \equiv \frac{f_2^3f_3^2}{f_1^9f_{6}} \equiv \frac{f_2^3}{f_3f_{6}}  \pmod{9} \\
& = \frac{1}{f_3f_6} \sum_{m\geq 0}(-1)^m(2m+1) q^{m(m+1)},
\end{align*}
using Lemma \ref{lemma2}. Since $1/f_3f_6$ is a function of $q^3$ and $m(m+1) \not\equiv 1 \pmod{3}$, we see that the coefficient of $q^{3n+1}$ on the right-hand side of the congruence above is 0 modulo $9$. Thus, \eqref{8.2} follows.
\end{proof}

%\begin{theorem}
%For all $n \geq 0$,
%\begin{equation*}
%d_{7}(7n+5) \equiv 0 \pmod{7}.
%\end{equation*}
%\end{theorem}

%\begin{proof}
%The generating function for $d_{7}(n)$ satisfies 
%\begin{align*}
% \sum_{n=0}^{\infty}d_{7}(n)q^{n} & = \frac{f_2^{7}}{f_1^{22}} \equiv \frac{f_{14}}{f_7^3}\frac{1}{f_1} \\
% & \equiv \frac{f_{14}}{f_7^3} \sum_{n=0}^{\infty}p(n)q^n \pmod{7}.
%\end{align*}
%Thanks to Ramanujan, we know that $p(7n+5) \equiv 0 \pmod{7}$. Since $f_{14}/f_{7}^3$ is a function of $q^7$, the result follows.
%\end{proof}

\section{New Infinite Families of Congruences}
In this section, we provide multiple extensions of the results of Andrews and Paule above which, in turn, provide new infinite families of congruences satisfied by the functions $d_k(n).$ 

The next result is rather surprising, primarily because the moduli in question range across {\bf all} primes $p\geq 5.$  The proof of this theorem follows easily via elementary generating function manipulations along with a $q$--series identity (Lemma \ref{lemma3} above) of Ramanujan.

%We now prove that for any prime $p>3$ we have $d_{p-2}(pn+r) \equiv 0 \pmod{p}$, for certain values of $r$.

\begin{theorem}
\label{mainthm1.1}
Let $p \geq 5$ be a prime and let $r$, $1 \leq r \leq p-1$, be such that $24r+1$ is a quadratic nonresidue modulo $p$.  Then, for all $n \geq 0$,
$$d_{p-2}(pn+r) \equiv 0 \pmod{p}.$$
\end{theorem}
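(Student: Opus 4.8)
The plan is to reduce the generating function modulo $p$ to a single Ramanujan-type theta series and then read off the coefficient of $q^{pn+r}$ directly. Starting from \eqref{gf} with $k=p-2$, note that $3(p-2)+1 = 3p-5$, so
\begin{equation*}
\sum_{n=0}^{\infty} d_{p-2}(n)q^n = \frac{f_2^{p-2}}{f_1^{3p-5}} = \frac{f_1^5 f_2^{p-2}}{f_1^{3p}}.
\end{equation*}
Next I would use the Frobenius-type congruence $f_r^p \equiv f_{pr} \pmod{p}$ (which follows from $(1-q^{rm})^p \equiv 1-q^{prm} \pmod p$) to strip off the $p$-th powers. Writing $f_2^{p-2} = f_2^p/f_2^2$ and $f_1^{3p} = (f_1^p)^3$, this gives
\begin{equation*}
\sum_{n=0}^{\infty} d_{p-2}(n)q^n \equiv \frac{f_1^5}{f_2^2}\cdot\frac{f_{2p}}{f_p^3} \pmod{p}.
\end{equation*}
The reason for choosing $k=p-2$ is precisely that the leftover factor is $f_1^5/f_2^2$, to which Ramanujan's identity (Lemma \ref{lemma3}) applies.

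I would then invoke Lemma \ref{lemma3} to write $f_1^5/f_2^2 = \sum_{m} (6m+1)q^{m(3m+1)/2}$, while observing that $f_{2p}/f_p^3$ is a function of $q^p$ and hence contributes only exponents divisible by $p$. Therefore the coefficient of $q^{pn+r}$ on the right-hand side is a sum of terms $(6m+1)$ taken over those integers $m$ for which $m(3m+1)/2 \equiv r \pmod{p}$.

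The crux is to show that this index set is empty. Completing the square via the identity $24\cdot\frac{m(3m+1)}{2} = (6m+1)^2 - 1$, the congruence $m(3m+1)/2 \equiv r \pmod{p}$ becomes equivalent (using that $p\geq 5$, so $24$ is invertible modulo $p$) to $(6m+1)^2 \equiv 24r+1 \pmod{p}$. Since $24r+1$ is assumed to be a quadratic nonresidue modulo $p$, no such $m$ exists, so the coefficient of $q^{pn+r}$ is an empty sum and hence $\equiv 0 \pmod{p}$, as claimed.

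The argument is essentially mechanical once the generating-function reduction is in place; the only genuine content is the algebraic fact that the exponents $p-2$ and $3p-5$ conspire, after removal of the $p$-th powers, to leave exactly $f_1^5/f_2^2$. I do not anticipate a serious obstacle, though I would double-check the completing-the-square arithmetic and the invertibility of $24$ modulo $p$, both of which hold for every prime $p\geq 5$.
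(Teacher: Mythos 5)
Your proposal is correct and follows essentially the same route as the paper's own proof: the identical reduction $\frac{f_2^{p-2}}{f_1^{3p-5}} \equiv \frac{f_1^5}{f_2^2}\cdot\frac{f_{2p}}{f_p^3} \pmod{p}$ via the Frobenius congruence $f_r^p \equiv f_{pr} \pmod{p}$, the same appeal to Lemma \ref{lemma3}, the same observation that $f_{2p}/f_p^3$ is a function of $q^p$, and the same completing-the-square step $24\cdot\frac{m(3m+1)}{2} = (6m+1)^2-1$ leading to $(6m+1)^2 \equiv 24r+1 \pmod{p}$, which is impossible since $24r+1$ is a quadratic nonresidue. The only (harmless) difference is that you phrase the obstruction as a congruence on the exponents rather than the paper's equation $24pn+24r+1=(6m+1)^2$, which is if anything slightly more careful.
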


\begin{proof}
The generating function for $d_{p-2}(n)$ is 
\begin{align*}
\sum_{n=0}^{\infty} d_{p-2}(n)q^n & = \frac{f_2^{p-2}}{f_1^{3p-5}} \equiv \frac{f_1^5}{f_2^2}\frac{f_{2p}}{f_p^3} \pmod{p}.    
\end{align*}
Thanks to Lemma \ref{lemma3}, we have 
\begin{equation}
    \sum_{n=0}^{\infty} d_{p-2}(n)q^n \equiv \frac{f_{2p}}{f_p^3} \left( \sum_{m=-\infty}^{\infty} (6m+1)q^{m(3m+1)/2} \right) \pmod{p}.
\label{eq8}
\end{equation}
We want to know whether $m(3m+1)/2 = pn+r$, for some $m$ and $n$. This is equivalent to asking whether $24pn+24r+1 = (6m+1)^2$, which implies $24r + 1 \equiv (6m+1)^2 \pmod{p}$. However $24r+1$ is a quadratic nonresidue modulo $p$. Therefore $d_{p-2}(pn+r) \equiv 0 \pmod{p}.$
\end{proof}
The theorem above is a fairly standard and classic result, providing exactly $(p-1)/2$ congruences for each prime $p.$  Interestingly enough, numerical evidence indicates that there are actually $(p-1)/2 + 1$ or $(p+1)/2$ such congruences for each prime $p.$  We explain this additional ``special'' congruence for each prime $p\geq 5$ here.  
\begin{theorem}
\label{mainthm1.2}
Let $p \geq 5$ be a prime and let $t$, $1 \leq t \leq p-1$, be the unique value such that $24t+1 \equiv 0 \pmod{p}$.  Then, for all $n \geq 0$, 
$$d_{p-2}(pn+t) \equiv 0 \pmod{p}.$$  
\end{theorem}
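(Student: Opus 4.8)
The plan is to start from the same congruence representation used in Theorem~\ref{mainthm1.1}, namely
\begin{equation*}
\sum_{n=0}^{\infty} d_{p-2}(n)q^n \equiv \frac{f_{2p}}{f_p^3}\left(\sum_{m=-\infty}^{\infty}(6m+1)q^{m(3m+1)/2}\right)\pmod{p},
\end{equation*}
which follows from the factorization $f_2^{p-2}/f_1^{3p-5}\equiv (f_1^5/f_2^2)(f_{2p}/f_p^3)$ together with Lemma~\ref{lemma3}. Since $f_{2p}/f_p^3$ is a function of $q^p$, it contributes only exponents divisible by $p$, so the coefficient of $q^{pn+t}$ on the right-hand side comes entirely from those terms $q^{m(3m+1)/2}$ in the sum whose exponent satisfies $m(3m+1)/2\equiv t\pmod{p}$.

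The key observation, which distinguishes this ``special'' congruence from the $(p-1)/2$ congruences in Theorem~\ref{mainthm1.1}, is that here $24t+1\equiv 0\pmod p$, so $24t+1$ is \emph{not} a nonresidue but rather $0$ modulo $p$. Completing the square as before gives $24\cdot(m(3m+1)/2)+1=(6m+1)^2$, so the condition $m(3m+1)/2\equiv t\pmod p$ becomes $(6m+1)^2\equiv 24t+1\equiv 0\pmod p$, which forces $6m+1\equiv 0\pmod p$. Unlike the nonresidue case, this congruence \emph{is} solvable, so there genuinely are terms landing in the relevant residue class; the argument must therefore exploit the coefficient rather than the exponent. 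But the coefficient of $q^{m(3m+1)/2}$ is exactly $6m+1$, and whenever $6m+1\equiv 0\pmod p$ that coefficient is divisible by $p$. Hence every term contributing to $q^{pn+t}$ has a coefficient $\equiv 0\pmod p$, and after multiplying by the $q^p$-series $f_{2p}/f_p^3$ (whose coefficients are integers) the total coefficient of $q^{pn+t}$ remains divisible by $p$. Therefore $d_{p-2}(pn+t)\equiv 0\pmod p$.

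I would carry out the steps in this order: (i) record the generating function congruence and note that $f_{2p}/f_p^3$ is a series in $q^p$; (ii) reduce to the condition $m(3m+1)/2\equiv t\pmod p$ and complete the square to get $(6m+1)^2\equiv 24t+1\pmod p$; (iii) use the defining property $24t+1\equiv 0\pmod p$ to deduce $6m+1\equiv 0\pmod p$; (iv) conclude that the coefficient $6m+1$ is itself divisible by $p$ for every such $m$, and hence the relevant coefficient of the product vanishes modulo $p$. The main conceptual point—and the only place requiring care—is the shift in emphasis: in Theorem~\ref{mainthm1.1} the exponent condition is unsatisfiable, whereas here it is satisfiable but the surviving terms are killed by the linear coefficient $6m+1$ of Lemma~\ref{lemma3}. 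There is no serious computational obstacle; one should just verify that $t$ with $24t+1\equiv 0\pmod p$ exists and is unique in the range $1\le t\le p-1$, which holds because $\gcd(24,p)=1$ for $p\ge 5$ guarantees $24$ is invertible modulo $p$, and one must check that the solution is not $t\equiv 0$, i.e.\ that $t$ does actually fall in $\{1,\dots,p-1\}$ rather than being forced to $0$.
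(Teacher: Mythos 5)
Your proposal is correct and follows essentially the same route as the paper's own proof: both start from the congruence \eqref{eq8}, complete the square to obtain $(6m+1)^2 \equiv 24t+1 \equiv 0 \pmod{p}$, and conclude that the surviving terms are annihilated because the coefficient $6m+1$ in Lemma~\ref{lemma3} is itself divisible by $p$. Your extra remarks (that the exponent condition should be read modulo $p$ because $f_{2p}/f_p^3$ only shifts exponents by multiples of $p$, and that $t$ exists and is unique since $24$ is invertible modulo $p$) are slightly more careful than the paper's phrasing but do not change the argument.
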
 

\begin{proof}
Thanks to Lemma \ref{lemma3} and \eqref{eq8} above, we need to ask whether $pn+t = m(3m+1)/2$ for some integers $m$ and $n$. Completing the square and looking the result modulo $p$ yields $(6m+1)^2 \equiv 24t + 1 \equiv 0 \pmod{p}$. So $p$ divides $(6m+1)^2$, which implies that $p$ divides $6m+1$.  Since the coefficient of $q^{m(3m+1)/2}$ in the series that appears in Lemma \ref{lemma3} is exactly $6m+1$, it follows that the coefficient in question is congruent to $0$ modulo $p$. 
\end{proof}

\begin{theorem}
\label{mainthm1.3}
Let $p \geq 5$ be a prime and let $r$, $1 \leq r \leq p-1$, be a quadratic nonresidue modulo $p.$  Then, for all $n \geq 0$, 
$$d_{p-1}(pn+r) \equiv 0 \pmod{p}.$$
\end{theorem}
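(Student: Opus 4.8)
The plan is to follow the same template as the proof of Theorem~\ref{mainthm1.1}: reduce the generating function modulo $p$ until it becomes (a function of $q^p$) times a simple theta series, then read off which residue classes of exponents can appear.

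First I would record the generating function, namely $\sum_{n\geq 0} d_{p-1}(n)q^n = f_2^{p-1}/f_1^{3p-2}$ (using $3(p-1)+1 = 3p-2$). The engine of the reduction is the prime Frobenius congruence $f_r^p \equiv f_{rp} \pmod p$, which holds because $(1-q^{rk})^p \equiv 1-q^{rkp}\pmod p$. To push the exponent of $f_1$ up to a multiple of $3p$, I would borrow two factors into the numerator by writing $f_1^{3p-2} = f_1^{3p}/f_1^2$ and $f_2^{p-1}=f_2^p/f_2$. This yields
$$\sum_{n\geq 0} d_{p-1}(n)q^n = \frac{f_2^p\, f_1^2}{f_2\, f_1^{3p}} \equiv \frac{f_{2p}}{f_p^3}\cdot\frac{f_1^2}{f_2} \pmod p.$$

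Next I would apply \eqref{lem6.1} in Lemma~\ref{lemma4}, which gives the theta expansion $f_1^2/f_2 = \sum_{j=-\infty}^{\infty}(-1)^j q^{j^2}$, so that
$$\sum_{n\geq 0} d_{p-1}(n)q^n \equiv \frac{f_{2p}}{f_p^3}\left(\sum_{j=-\infty}^{\infty}(-1)^j q^{j^2}\right) \pmod p.$$
Because $f_{2p}/f_p^3$ is a function of $q^p$, every exponent it contributes is a multiple of $p$; hence a term of the form $q^{pn+r}$ can arise only from a summand with $j^2 \equiv r \pmod p$. Since $r$ is a quadratic nonresidue modulo $p$, no integer $j$ satisfies $j^2 \equiv r \pmod p$, so the coefficient of $q^{pn+r}$ vanishes modulo $p$, which gives $d_{p-1}(pn+r)\equiv 0 \pmod p$.

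I do not anticipate a genuine obstacle here; the only point requiring care is the algebraic bookkeeping that lands the power of $f_1$ exactly on a multiple of $3p$ (so that $f_1^{3p}\equiv f_p^3$) while leaving the clean factor $f_1^2/f_2$ behind. Once that reduction is set up correctly, the quadratic nonresidue hypothesis on $r$ immediately blocks $j^2\equiv r\pmod p$, and the congruence follows, exactly paralleling the nonresidue argument in Theorem~\ref{mainthm1.1}. It is worth noting that this argument uses the plain theta series $\sum_j (-1)^j q^{j^2}$ rather than the Ramanujan identity of Lemma~\ref{lemma3}, which is why the relevant quadratic form is $j^2$ and the exceptional residue is simply any nonresidue, with no analogue of the extra ``special'' congruence handled in Theorem~\ref{mainthm1.2}.
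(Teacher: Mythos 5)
Your proposal is correct and follows essentially the same route as the paper: the same factorization $\frac{f_2^{p}}{f_1^{3p}}\cdot\frac{f_1^2}{f_2}$, the same Frobenius-type reduction $f_1^{3p}\equiv f_p^3$, $f_2^p\equiv f_{2p} \pmod{p}$, and the same application of \eqref{lem6.1} followed by the quadratic nonresidue argument. (If anything, you are slightly more careful than the paper's displayed equation, which drops the sign $(-1)^j$ in the theta series; the sign is harmless for the argument either way.)
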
 
\begin{proof}
Using \eqref{gf}, we know that, for prime $p\geq 5,$ 
\begin{align*}
\sum_{n=0}^{\infty} d_{p-1}(n)q^n 
&= \frac{f_2^{p-1}}{f_1^{3p-2}} \\
&= \frac{f_2^{p}}{f_1^{3p}}\frac{f_1^{2}}{f_2} \\
&\equiv \frac{f_{2p}}{f_p^{3}}\frac{f_1^{2}}{f_2} \pmod{p} \\
&= \frac{f_{2p}}{f_p^{3}}\left(\sum_{j=-\infty}^{\infty} q^{j^2}  \right)  
\end{align*}
using Lemma \ref{lemma4} above.  The result immediately follows by recognizing that $\frac{f_{2p}}{f_p^{3}}$ is a function of $q^p$ and that $r$ has been defined to be a quadratic nonresidue modulo $p.$
\end{proof}

We next provide an overarching result which allows us to naturally generalize Theorems \ref{APCor5}--\ref{APCor15} above as well as Theorems \ref{mainthm1.1},  \ref{mainthm1.2}, and \ref{mainthm1.3}.   

\begin{theorem}
\label{mainthm2}
Let $p$ be a prime, $k \geq 1,$ $j \geq 0,$ and $r$ be an integer such that $1\leq r \leq p-1.$  If, for all $n\geq 0,$ $$d_k(pn+r) \equiv 0 \pmod{p},$$ then for all $n\geq 0,$
$$d_{pj+k}(pn+r) \equiv 0 \pmod{p}.$$
\end{theorem}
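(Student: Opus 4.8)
The plan is to exploit the fact that the generating function \eqref{gf} is multiplicative in the parameter $k$. Since
$$\frac{f_2^{pj+k}}{f_1^{3(pj+k)+1}} = \frac{f_2^{pj}}{f_1^{3pj}}\cdot\frac{f_2^{k}}{f_1^{3k+1}},$$
I would first write
$$\sum_{n=0}^{\infty} d_{pj+k}(n)q^n = \frac{f_2^{pj}}{f_1^{3pj}}\sum_{n=0}^{\infty} d_{k}(n)q^n.$$
The entire argument then rests on showing that the prefactor $f_2^{pj}/f_1^{3pj}$ reduces, modulo $p$, to a power series in $q^p$.

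The next step is to invoke the elementary Frobenius congruence $f_r^p \equiv f_{pr} \pmod{p}$, which follows from $(1-q^{rm})^p \equiv 1-q^{prm} \pmod{p}$ term by term (the intermediate binomial coefficients $\binom{p}{i}$, $0<i<p$, all being divisible by $p$). Applying this with $r=2$ and $r=1$ gives
$$\frac{f_2^{pj}}{f_1^{3pj}} = \frac{(f_2^{p})^{j}}{(f_1^{p})^{3j}} \equiv \frac{f_{2p}^{\,j}}{f_p^{\,3j}} \pmod{p},$$
and the right-hand side is manifestly a function of $q^p$; write it as $\sum_{m\ge 0} a_m q^{pm}$ with $a_m \in \mathbb{Z}$.

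Finally I would extract the relevant arithmetic progression. Working modulo $p$,
$$\sum_{n=0}^{\infty} d_{pj+k}(n)q^n \equiv \Big(\sum_{m\ge 0} a_m q^{pm}\Big)\Big(\sum_{n=0}^{\infty} d_{k}(n)q^n\Big) \pmod{p},$$
so the coefficient of $q^{pN+r}$ on the left equals $\sum_{m\ge 0} a_m\, d_k\big(p(N-m)+r\big)$, where terms with negative argument vanish. Because the residue $r$ is preserved in each summand, the hypothesis $d_k(pn+r)\equiv 0 \pmod{p}$ forces every surviving term to be divisible by $p$, whence $d_{pj+k}(pN+r)\equiv 0\pmod{p}$.

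I expect no genuine obstacle here: the proof is essentially a bookkeeping argument once the multiplicative splitting and the congruence $f_r^p\equiv f_{pr}$ are in place. The one point deserving care is the verification that convolving with a series in $q^p$ does not mix residue classes modulo $p$ — precisely the observation that makes the hypothesis on $d_k$ transfer intact to $d_{pj+k}$.
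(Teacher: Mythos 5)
Your proposal is correct and follows exactly the paper's own argument: the same multiplicative splitting of the generating function, the same reduction $f_2^{pj}/f_1^{3pj} \equiv f_{2p}^{j}/f_p^{3j} \pmod{p}$ via the Frobenius congruence, and the same observation that convolving with a series in $q^p$ preserves residue classes modulo $p$. Your write-up is in fact slightly more detailed than the paper's, which leaves the binomial-coefficient justification and the coefficient-extraction bookkeeping implicit.
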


\begin{proof}
Replacing $k$ by $pj+k$ in \eqref{gf} we obtain
\begin{align*}
\sum_{n=0}^{\infty} d_{pj+k}(n)q^n & = \frac{f_2^{pj+k}}{f_1^{3pj+3k+1}} = \frac{f_2^k}{f_1^{3k+1}}\frac{f_2^{pj}}{f_1^{3pj}} \\
& \equiv \frac{f_{2p}^{j}}{f_p^{3j}}\sum_{m=0}^{\infty} d_{k}(m)q^m \pmod{p}.
\end{align*}
All the exponents of $q$ coming from ${f_{2p}^{j}}/{f_p^{3j}}$ are of the form of $pN$. Since $d_{k}(pm+r) \equiv 0 \pmod{p}$, it follows that $d_{pj+k}(pn+r) \equiv 0 \pmod{p}$.
\end{proof}

Theorem \ref{mainthm2} provides a tool for writing down infinitely many new congruences with ease.  We exhibit such a list of new congruences below, using a shorthand notation to consolidate the statement of the results.  In what follows, the notation 
$$
An+B_1, B_2, \dots, B_t
$$
means we are considering the set of arithmetic progressions 
$$
An+B_1, An+B_2, \dots, An+B_t.
$$

\begin{corollary}
\label{cor_list}
For all $j \geq 0$ and $n \geq 0$,
\begin{align}
{d_{2j+1}(2n+1)} & \equiv 0 \pmod{2}, \label{eq0.1} \\
d_{3j+2}(3n+2) & \equiv 0 \pmod{3}, \label{eq0} \\
d_{5j + 3}(5n+1,3,4) & \equiv 0 \pmod{5}, \label{eq2} \\
d_{5j + 4}(5n+2,3) & \equiv 0 \pmod{5}, \label{eq4.1} \\
d_{5j + 5}(5n+4) &  \equiv 0 \pmod{5}, \label{n1.1} \\
d_{7j+5}(7n+2,3,4,6) & \equiv 0 \pmod{7},\label{eq7} \\
d_{7j+6}(7n+3,5,6) & \equiv 0 \pmod{7},\label{eq7.4} \\
 d_{7j + 7}(7n+5) &  \equiv 0 \pmod{7}, \label{n1.2} \\
d_{11j + 2}(11n+7) &  \equiv 0 \pmod{11}, \label{new11mod2} \\
d_{11j+9}(11n+3,5,6,8,9,10) & \equiv 0 \pmod{11},\label{eq11.1} \\
d_{11j+10}(11n+2,6,7,8,10) & \equiv 0 \pmod{11},\label{eq11.2} \\
d_{11j + 11}(11n+6) &  \equiv 0 \pmod{11}, \label{n1.3} \\
d_{13j+11}(13n+3,4,6,7,8,10,11) & \equiv 0 \pmod{13}, \label{eq13.1} \\
d_{13j+12}(13n+2,5,6,7,8,11) & \equiv 0 \pmod{13}. \label{eq13.2}
\end{align}
\end{corollary}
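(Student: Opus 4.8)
The plan is to derive every line of Corollary~\ref{cor_list} as a single application of Theorem~\ref{mainthm2}. Each family $d_{pj+k}(pn+r)\equiv 0\pmod p$ stated in the corollary is exactly the conclusion of Theorem~\ref{mainthm2} applied to the base congruence $d_k(pn+r)\equiv 0\pmod p$ obtained by putting $j=0$; since the hypotheses of Theorem~\ref{mainthm2} (namely $p$ prime and $1\le r\le p-1$) hold for every listed case, the whole corollary reduces to collecting the relevant $j=0$ congruences from results proved earlier in the paper and invoking Theorem~\ref{mainthm2} once for each progression.

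First I would group the lines according to the value of $k$ relative to $p$. The base cases with $k=p-2$---the $j=0$ instances of \eqref{eq7}, \eqref{eq11.1}, and \eqref{eq13.1}---are supplied by Theorems~\ref{mainthm1.1} and~\ref{mainthm1.2}: for each listed residue $r$ one has either that $24r+1$ is a quadratic nonresidue modulo $p$ (covered by Theorem~\ref{mainthm1.1}) or that $24r+1\equiv 0\pmod p$ (covered by Theorem~\ref{mainthm1.2}). The base cases with $k=p-1$---the $j=0$ instances of \eqref{eq4.1}, \eqref{eq7.4}, \eqref{eq11.2}, and \eqref{eq13.2}---come from Theorem~\ref{mainthm1.3}, whose only requirement is that each listed $r$ be a quadratic nonresidue modulo $p$. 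The base cases with $k=p$---the $j=0$ instances of \eqref{n1.1}, \eqref{n1.2}, and \eqref{n1.3}---are furnished directly by Corollary~\ref{Cn1}. The remaining base cases are the individual congruences already established: \eqref{eq0} from Theorem~\ref{APCor5}, \eqref{eq2} from Theorems~\ref{APCor14} and~\ref{APCor15}, and \eqref{new11mod2} from Theorem~\ref{theorem_d2_mod11}.

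The one line not covered by an earlier statement is \eqref{eq0.1}, whose base case $d_1(2n+1)\equiv 0\pmod 2$ I would establish directly. Reducing \eqref{gf} with $k=1$ modulo $2$ gives $\sum_{n\ge 0} d_1(n)q^n = f_2/f_1^4 \equiv f_2/f_2^2 = 1/f_2 \pmod 2$, which is a function of $q^2$, so all odd-indexed coefficients vanish modulo $2$. Theorem~\ref{mainthm2} with $p=2$, $k=1$, $r=1$ then yields \eqref{eq0.1}.

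Everything else is bookkeeping, and the only substantive verification---the step I expect to carry the real content---is confirming that the residues listed on each line match the arithmetic conditions of the governing theorem. Concretely, for the $k=p-2$ lines one must check, using $24\equiv 3\pmod 7$, $24\equiv 2\pmod{11}$, and $24\equiv 11\pmod{13}$, that the displayed residues are precisely those $r$ with $24r+1$ a nonresidue of or equal to $0$ modulo $p$; and for the $k=p-1$ lines one must check that the displayed residues are precisely the quadratic nonresidues modulo $p$. These are short finite checks against the residue tables for $p\in\{5,7,11,13\}$; once they are confirmed, each line of Corollary~\ref{cor_list} follows from a single invocation of Theorem~\ref{mainthm2}.
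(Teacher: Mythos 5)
Your proposal is correct and takes essentially the same route as the paper's own proof: invoke Theorem \ref{mainthm2} to reduce every line to its $j=0$ base case, cite Theorems \ref{APCor5}, \ref{APCor14}, \ref{APCor15}, \ref{theorem_d2_mod11}, \ref{mainthm1.1}, \ref{mainthm1.2}, \ref{mainthm1.3} and Corollary \ref{Cn1} for those base cases, and handle \eqref{eq0.1} directly via $f_2/f_1^4 \equiv 1/f_2 \pmod{2}$. Your explicit verification that the listed residues match the quadratic-residue conditions for $p \in \{5,7,11,13\}$ is a detail the paper leaves implicit, but it changes nothing in the argument.
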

\begin{remark}
The corollary above does not provide an exhaustive list of congruences satisfied by these functions.  Our goal in writing these here is to provide the reader with a representative set of the kinds of congruences that arise within this family of partition functions.  
\end{remark}
\begin{proof}(of Corollary \ref{cor_list})
Thanks to Theorem \ref{mainthm2}, we only have to check the basis case, $j=0$, for all of the above results. To prove \eqref{eq0.1}, we note that taking $k=1$ in \eqref{gf} we are left with
$$\sum_{n=0}^{\infty} d_{1}(m)q^m  =  \frac{f_2}{f_1^{4}} \equiv \frac{1}{f_2} \pmod{2},$$
from which we see that $d_1(2n+1) \equiv 0 \pmod{2}$ and then \eqref{eq0.1} follows.

%The basis case for \eqref{eq0.2} is proven in Theorem \ref{APCor12}.

The basis case for \eqref{eq0} is proven in Theorem \ref{APCor5}.

%The basis case of \eqref{eq5} is the $j=1$ case of \eqref{eq0}.

The basis cases for the congruences listed in \eqref{eq2} are proven in Theorems \ref{APCor14} and \ref{APCor15}.

The basis cases for \eqref{eq4.1} and \eqref{eq7.4} follow from the $p=5$ and $p=7$ cases of Theorem \ref{mainthm1.3} above, respectively.

The basis cases for \eqref{n1.1}, \eqref{n1.2}, and \eqref{n1.3} follow from Corollary \ref{Cn1} above.

The basis cases for \eqref{eq7}--\eqref{eq13.2} follow immediately from Theorems \ref{mainthm1.1}, \ref{mainthm1.2}, and \ref{mainthm1.3}.

The basis case for \eqref{new11mod2} follows from Theorem \ref{theorem_d2_mod11} above.

\end{proof}

We close this section with two infinite families of congruences modulo 9 which are closely related to congruences that appear in Andrews and Paule \cite{ap13}.

\begin{theorem}
For all $j \geq 0$ and $n \geq 0$,
\begin{eqnarray*}
d_{9j+2}(9n+5) &\equiv& 0\pmod{9}, \\ %\label{9.1} \\
d_{9j+2}(9n+8) &\equiv& 0\pmod{9}.    %\label{9.2} 
\end{eqnarray*}
\end{theorem}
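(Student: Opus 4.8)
The plan is to give a self-contained argument, since the modulus $9$ is not prime and Theorem \ref{mainthm2} (which is stated for a prime modulus) does not apply directly. First I would record the elementary lifting congruences that drive everything. From $(1-q^n)^3\equiv 1-q^{3n}\pmod 3$ we get $f_1^3\equiv f_3\pmod 3$; and since $u\equiv v\pmod 3$ forces $u^3\equiv v^3\pmod 9$, squaring the lift gives $f_1^9\equiv f_3^3\pmod 9$, with the $q\mapsto q^2$ and $q\mapsto q^3$ versions $f_2^9\equiv f_6^3\pmod 9$ and $f_3^9\equiv f_9^3\pmod 9$, together with the mod $3$ facts $f_6^3\equiv f_{18}$ and $f_3^3\equiv f_9\pmod 3$. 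These are the only inputs beyond the listed $q$-series lemmas.

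Next I would reduce the generating function modulo $9$ in a uniform-in-$j$ way. Writing
\[
\sum_{n\ge 0} d_{9j+2}(n)\,q^n=\frac{f_2^{9j+2}}{f_1^{27j+7}}=\frac{f_2^{9j}}{f_1^{27j}}\cdot\frac{(f_1f_2)^2}{f_1^9}
\]
and applying $f_2^9\equiv f_6^3$, $f_1^{27}\equiv f_3^9$, and $f_1^9\equiv f_3^3$, this becomes $\dfrac{f_6^{3j}}{f_3^{9j}}\cdot\dfrac{(f_1f_2)^2}{f_3^3}\pmod 9$ (for $j=0$ this recovers the base case $d_2$). Since both $9n+5$ and $9n+8$ are $\equiv 2\pmod 3$, and the multiplier $f_6^{3j}/f_3^{9j}$ contributes only exponents $\equiv 0\pmod 3$, it suffices to isolate the part of $(f_1f_2)^2/f_3^3$ supported on exponents $\equiv 2\pmod 3$.

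The key computation uses the $3$-dissection $f_1f_2=A-qB-2q^2C$ of Lemma \ref{lemma6}, where $A=\frac{f_6f_9^4}{f_3f_{18}^2}$, $B=f_9f_{18}$, and $C=\frac{f_3f_{18}^4}{f_6f_9^2}$ are all functions of $q^3$. Squaring and collecting the exponent-$\equiv 2\pmod 3$ terms leaves $q^2(B^2-4AC)$; the fortunate cancellation here is that $AC=f_9^2f_{18}^2=B^2$, so $B^2-4AC=-3B^2=-3f_9^2f_{18}^2$. Hence the residue-$2$-mod-$3$ part of our series is congruent to $-3q^2\,\dfrac{f_6^{3j}f_9^2f_{18}^2}{f_3^{9j+3}}\pmod 9$.

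The final and most delicate step exploits the explicit factor of $3$ just produced: modulo $9$, the remaining cofactor need only be known modulo $3$. Reducing $f_6^{3j}\equiv f_{18}^j$ and $f_3^{9j+3}=(f_3^3)^{3j+1}\equiv f_9^{3j+1}\pmod 3$ collapses the cofactor to $f_{18}^{j+2}/f_9^{3j-1}$, which is a function of $q^9$. Thus the residue-$2$-mod-$3$ part equals $-3q^2$ times a function of $q^9$ modulo $9$, so every surviving exponent is $\equiv 2\pmod 9$, forcing the coefficients at exponents $\equiv 5\pmod 9$ and $\equiv 8\pmod 9$ to vanish modulo $9$ — exactly the two claimed congruences. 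I expect this reduction to carry the real content: the naive lifting that treats $f_6^{3j}/f_3^{9j}$ merely as a function of $q^3$ fails, because it would spuriously demand $d_2(9n+2)\equiv 0\pmod 9$, which is false; it is precisely the factor of $3$ coming from $B^2-4AC=-3B^2$ that lets one drop to working modulo $3$ in the cofactor and thereby upgrade ``function of $q^3$'' to ``function of $q^9$.''
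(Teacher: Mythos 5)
Your proof is correct, but it follows a genuinely different route from the paper's for general $j$. The paper proves only the $j=0$ case by the computation you give (the reduction $f_1^9\equiv f_3^3\pmod 9$, the $3$-dissection of Lemma \ref{lemma6}, and the cancellation $B^2-4AC=-3f_9^2f_{18}^2$), and then handles general $j$ by induction: it writes $\sum_{n\ge 0}d_{9(j+1)+2}(n)q^n\equiv (f_6^3/f_9^3)\sum_{m\ge 0}d_{9j+2}(m)q^m\pmod 9$, expands $f_6^3$ via Jacobi's identity (Lemma \ref{lemma2}) to get $f_6^3\equiv\sum_k a_kq^{9k}+3\sum_k b_kq^{9k+6}\pmod 9$, disposes of the $q^{9k}$ part by the induction hypothesis, and disposes of the $3q^{9k+6}$ part by invoking the mod $3$ family $d_{9j+2}(3n+2)\equiv 0\pmod 3$ from \eqref{eq0} of Corollary \ref{cor_list}. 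You avoid induction entirely: by absorbing $f_6^{3j}/f_3^{9j}$ into the cofactor before reducing, the explicit factor $3$ produced by $B^2-4AC=-3B^2$ lets you read the entire cofactor modulo $3$, upgrading it from a function of $q^3$ to a function of $q^9$. This is in fact the same mechanism at work in the paper's inductive step (there the factor $3$ comes from the coefficients $2k+1$ with $k\equiv 1\pmod 3$ in Jacobi's identity), but your packaging yields the closed form $-3q^2f_{18}^{j+2}/f_9^{3j-1}$ for the residue-$2$ part uniformly in $j$, and it dispenses with both Lemma \ref{lemma2} and the appeal to Corollary \ref{cor_list}; what the paper's route buys instead is a reusable inductive template in the spirit of Theorem \ref{mainthm2}, applicable even when no clean closed form survives. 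Two cosmetic points: $f_1^9\equiv f_3^3\pmod 9$ comes from \emph{cubing} $f_1^3\equiv f_3\pmod 3$, not squaring; and your $j=0$ specialization gives $\sum_{n\ge 0}d_2(3n+2)q^n\equiv -3f_3f_6^2\pmod 9$, which is the correct simplification of $-3f_3^2f_6^2/f_1^3$ (the paper prints $-3f_3^2f_6^2$, a harmless slip, since either way the series is a function of $q^3$ and the conclusion stands).
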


\begin{proof}
Using the generating function for $d_2$, we have 
\begin{eqnarray*}
\sum_{n=0}^{\infty} d_2(n)q^n 
&=& 
\frac{f_2^2}{f_1^7} = \frac{1}{f_1^9} f_1^2f_2^2 \\
&\equiv& 
\frac{1}{f_3^3} f_1^2f_2^2 \pmod{9}\\
&\equiv& 
\frac{1}{f_3^3}\left( \frac{f_6f_9^4}{f_3f_{18}^2} -qf_9f_{18} -2q^2\frac{f_3f_{18}^4}{f_6f_9^2} \right)^2   \pmod{9}
\end{eqnarray*}
using Lemma \ref{lemma6} above. Extracting the terms involving $q^{3n+2}$, dividing by $q^2$ and replacing $q^3$ by $q$, we obtain
$$\sum_{n=0}^{\infty} d_2(3n+2)q^n \equiv -3f_3^2f_6^2 \pmod{9}.$$
Since $f_3$ and $f_6$ are functions of $q^3$ it follows that
\begin{eqnarray*}
d_{2}(9n+5) &\equiv& 0\pmod{9}, \\
d_{2}(9n+8) &\equiv& 0\pmod{9}, 
\end{eqnarray*}
which provides the basis case for an inductive proof. Thus, let us assume that $d_{9j+2}(9n+5) \equiv 0\pmod{9}$ and $d_{9j+2}(9n+8) \equiv 0\pmod{9}$ for some $j$. In order to complete the proof, we note that 
\begin{align}
\sum_{n=0}^{\infty}d_{9(j+1)+2}(n)q^{n} & = \frac{f_2^{9(j+1)+2}}{f_1^{27(j+1)+7}} = \frac{f_2^9}{f_1^{27}}\frac{f_2^{9j+2}}{f_1^{27j+7}} \nonumber \\
& = \frac{f_2^9}{f_1^{27}} \sum_{m=0}^{\infty}d_{9j+2}(m)q^{m} \nonumber \\
& \equiv \frac{f_6^3}{f_9^{3}} \sum_{m=0}^{\infty}d_{9j+2}(m)q^{m}  \pmod{9}. \label{6.12.1}
\end{align}
Using Lemma \ref{lemma2}, we have
$$f_6^3 = \sum_{k \geq 0}(-1)^k(2k+1)q^{3k(k+1)}.$$
For all $k \geq 0$, we know that $3k(k+1) \equiv 0,6 \pmod{9}$, with $3k(k+1) \equiv 6 \pmod{9}$ if and only if $k \equiv 1,4,7 \pmod{9}$, which is equivalent to $2k+1 \equiv 3,9,15 \pmod{9}$. Thus, modulo $9$, $f_6^3$ becomes
$$f_6^3 \equiv \sum_{k \geq 0}a_kq^{9k} + 3 \sum_{k \geq 0}b_kq^{9k+6} \pmod{9},$$
for certain integer coefficients $a_k$ and $b_k$. Hence, using \eqref{6.12.1} we obtain
\begin{align*}
\sum_{n=0}^{\infty}d_{9(j+1)+2}(n)q^{n} & \equiv \frac{1}{f_9^{3}} \left(  \sum_{k \geq 0}a_kq^{9k}\sum_{m=0}^{\infty}d_{9j+2}(m)q^{m} \right. \\
& \ \ \ \ \left. + 3 \sum_{k \geq 0}b_kq^{9k+6}\sum_{m=0}^{\infty}d_{9j+2}(m)q^{m} \right) \pmod{9}.
\end{align*}
By the hypothesis, we see that the coefficients of $q^{9n+5}$ and $q^{9n+8}$ in the first term of the congruence above are congruent to $0$ modulo $9$. For the second term in the congruence above, namely
\begin{equation}
\sum_{m,k \geq 0} 3b_kd_{9j+2}(m)q^{9k+m+6},
\label{r1}
\end{equation}
we note that $9k+m+6 \equiv 5 \pmod{9}$ if and only if $m \equiv 8 \pmod{9}$. In this case, we know that $m = 9N+8$ for some $N$. By \eqref{eq0} in Corollary \ref{cor_list}, we have $d_{9j+2}(9n+8) \equiv 0 \pmod{3}$, for all $n$, which yields $3b_kd_{9j+2}(9N+8) \equiv 0 \pmod{9}$. Thus the coefficients of $q^{9n+5}$ in \eqref{r1} are congruent to $0$ modulo $9$. Analogously, we see that $9k+m+6 \equiv 8 \pmod{9}$ if and only if $m \equiv 2 \pmod{9}$. So, $m = 9N+2$ for some $N$. However, we also know from \eqref{eq0} in Corollary \ref{cor_list} that $d_{9j+2}(9n+2) \equiv 0 \pmod{3}$, for all $n$. Thus, $3b_kd_{9j+2}(9N+2) \equiv 0 \pmod{9}$, from which we conclude that the coefficients of $q^{9n+8}$ in \eqref{r1} are congruent to $0$ modulo $9$.
\end{proof}

\section{Concluding Remarks}
We readily admit that the above results are not an exhaustive list of all of the congruences satisfied by the functions $d_k(n).$  Indeed, there are many other congruences to consider.  For example, computational evidence hints at the possibility of infinite families of congruences modulo arbitrarily high powers of 3 for the function $d_{11}(n)$ as well as the possibility of infinite families of congruences modulo powers of 2 for $d_7(n)$ and $d_{15}(n)$ (in the spirit of the work completed by Smoot for $d_2$ modulo powers of 3).  Indeed, for $k=2^j-1$ for some $j,$ it is clear that the generating function for $d_k(n)$ will have a structure that allows for a number of congruences to hold for small moduli.  One must wonder whether an {\bf infinite} family of congruences, modulo powers of 2, like the family in Theorem \ref{SmootCongs} holds for these special values of $k.$  We leave such an investigation to the interested reader.

%\begin{align*}
%d_{11j+2}(11n+7) & \equiv 0 \pmod{11} \\  
%d_{13j+3}(13n+11) & \equiv 0 \pmod{13} \\
%\end{align*}

\section*{Acknowledgments}

The first author was supported by S\~ao Paulo Research Foundation (FAPESP) (grant no. 2019/14796-8).

\end{document}